\theoremstyle{plain}
\newtheorem{theorem}{Theorem}[section]
\newtheorem{definition}[theorem]{Definition}
\newtheorem{proposition}[theorem]{Proposition}
\newtheorem{corollary}[theorem]{Corollary}
\newtheorem{lemma}[theorem]{Lemma}
\theoremstyle{remark}
\numberwithin{equation}{section}
\numberwithin{figure}{section}
\newcommand{\eps}{\varepsilon}
\newcommand{\tr}{\mathrm{tr}}
\newcommand{\Q}{\mathbb{Q}}
\newcommand{\R}{\mathbb{R}}
\newcommand{\E}{\mathbb{E}}
\newcommand{\Prob}{\mathbb{P}}
\newcommand{\Haarof}[1]{m_{#1}}
\begin{document}
	\title[Entropy Theory for Random Walks on Lie Groups]{Entropy Theory for Random Walks on Lie Groups}
	\author[sk]{Samuel Kittle}
    \author[ck]{Constantin Kogler}

    \begin{abstract}
        We develop entropy and variance results for the product of independent identically distributed random variables on Lie groups. Our results apply to the study of stationary measures in various contexts.   
    \end{abstract}

    \email{s.kittle@ucl.ac.uk, kogler@ias.edu}

    \address{Samuel Kittle, Department of Mathematics, University College London, 25 Gordon Street, London WC1H 0AY, United Kingdom}

    \address{Constantin Kogler, Institute for Advanced Study, 1 Einstein Dr, Princeton, NJ 08540, United States of America}

    \maketitle

    \tableofcontents
	
    \section{Introduction}

    Entropy is a central tool in the study of random walks. For example, as exposed in the book by Johnson \cite{JohnsonInformationBook}, entropic methods can be used to prove the central limit theorem on $\R^d$ as well as equidistribution of random walks on compact groups. More recently, entropy results have been applied in the study of stationary measures as in the work of Hochman \cite{Hochman2014}, \cite{Hochman2017} on the dimension of self-similar measures, of Bárány-Hochman-Rapaport \cite{BaranyHochmanRapaport2019} for self-affine measures, or of Varjú \cite{Varju2019} to construct absolutely continuous Bernoulli convolutions.   
    
    The aim of this paper is to establish entropy and variance bounds for smoothings of random walks on arbitrary Lie groups. We strive to be as general as feasible to ensure applicability to various settings.  Our results generalise the entropy and variance bounds previously established in \cite{Kittle2023} for $\mathrm{SL}_2(\R)$, which were developed to construct absolutely continuous Furstenberg measures. Furthermore, the results presented here have been applied in \cite{KittleKogler2024ACSS} and \cite{KittleKogler2024Dimension} to study self-similar measures. We anticipate that this paper will contribute to the investigation of random walks of Lie group actions on manifolds in various contexts.
    
    Throughout this paper, let $G$ be a real Lie group of dimension $\ell$ and denote by $\mathfrak{g}$ the Lie algebra of $G$. We fix an inner product on $\mathfrak{g}$, inducing an associated norm $|\circ|$ and a left-invariant metric $d$ on $G$. Also, denote by $\log: G \to \mathfrak{g}$ the logarithm on $G$, which is defined in a small neighbourhood around the identity. For $g \in G$ we write $B_{\eps}(g)$ for the open $\eps$-ball around $g$ and abbreviate $B_{\eps} =  B_{\eps}(\mathrm{Id})$ for $\mathrm{Id}$ the identity in $G$.  Finally, $\Haarof{G}$ is the Haar measure on $G$ that is normalised such that $\Haarof{G}(B_{\eps}) / \Haarof{\mathfrak{g}}(\log B_{\eps}) \to 1$ as $\eps \to 0$, where $\Haarof{\mathfrak{g}}$ is the volume measure induced by the fixed inner product on $\mathfrak{g}$.

    Given a $G$-valued random variable $g$, we denote by $$H(g)$$ the Shannon entropy of $g$ when $g$ is discrete and the differential entropy when $g$ is absolutely continuous. Precise definitions and some basic results are given in section~\ref{Entropy:Basic} and section~\ref{Entropy:KL}.

    Similarly to \cite{Hochman2014}, \cite{Hochman2017}, \cite{BaranyHochmanRapaport2019} or \cite{Varju2019}, we study the entropy of a smoothing of $g$. If $s$ is a smoothing distribution independent of $g$, we can abstractly define the entropy of $g$ with respect to $s$ as $$H(g; s) = H(gs) - H(s).$$ 

    Concretely, we will choose the following smoothing functions: For given $r > 0$ and $a \geq 1$, denote by $\beta_{a,r}$ a random variable on $\mathfrak{g}$ with density function $f_{a,r}: \mathfrak{g} \to \R$ given by $$f_{a,r}(x) = \begin{cases}
    C_{a,r}e^{-\frac{|x|^2}{2r^2}} & \text{if } |x| \leq ar, \\
    0 &\text{otherwise},
    \end{cases}$$ where $C_{a,r}$ is a normalising constant to ensure that $f_{a,r}$ integrates to $1$. We furthermore set 
    \begin{equation}\label{sardef}
        s_{a,r} = \exp(\beta_{a,r})
    \end{equation}
    and then define the entropy of $g$ at scale $r > 0$ with respect to the parameter $a \geq 1$ as 
    \begin{equation}\label{Hadef}
        H_a(g;r) = H(g; s_{a,r}) = H(gs_{a,r}) - H(s_{a,r}).
    \end{equation}
    Entropy at scale $r$ measures the amount of information $g$ has at scale $r$. The parameter $a$ is useful in order to uniformly control how far $\beta_{a,r}$ is from a normal distribution. The reason we are working with these smoothing functions on $G$ is to deduce in 
    Lemma~\ref{VarEntProperties} a quantitative analogue of \eqref{IntroGVarIneq}. It is assumed throughout this paper that the collection of random variables $g$ and $(s_{a,r})_{a \geq 1, r > 0}$ are independent. 

    Let $\mu$ be a probability measure on $G$ and let $\gamma_1, \gamma_2, \ldots$ be independent $\mu$-distributed random variables that are independent from $(s_{a,r})_{a \geq 1, r > 0}$ and denote for $n\geq 1$, $$q_n = \gamma_1 \cdots \gamma_n.$$ Our first goal is to give a general result on the behaviour of $H_a(q_n; r_n)$ for suitable scales $r_n > 0$. To do so, for a finitely supported probability measure $\mu$ on $G$ we define the separation rate as $$M_n = \min\left\{ d(g,h) \,:\, g,h \in \bigcup_{i = 0}^n \mathrm{supp}(\mu^{*i}) \text{ with } g\neq h \right\}.$$ Furthermore, the random walk entropy of $\mu$ is given as $$h_{\mu} = \lim_{n \to \infty} \frac{1}{n}H(\mu^{*n}) = \inf_{n \geq 1} \frac{1}{n}
    H(\mu^{*n}).$$

    We first make the following basic observation. If $r_n < \frac{1}{2a}M_n$, then all the elements in the support of $q_n$ are separated by at least $2ar_n$. Therefore, the density of $q_n s_{a, r_n}$ can be expressed as the weighted sum of the densities of $xs_{a, r_n}$ with $x$ ranging over the elements in the support of $q_n$. Thus it follows (by Lemma~\ref{EntProd}) that
    \begin{equation}\label{VeryBasicEntBound}
        H_a(q_n; r_n) = H(q_n) \geq n h_{\mu}.
    \end{equation}
    Our first result is a generalisation of \eqref{VeryBasicEntBound} for arbitrary stopped random walks $q_{\eta_n}$ for a sequence of stopping times $\eta_n$. In order to deduce an analogue of \eqref{VeryBasicEntBound}, we require that our stopping times satisfy a large deviation principle. 

\begin{definition}\label{EtaLDPDef}
    Let $\eta = (\eta_n)_{n \geq 1}$ be a sequence of stopping times. Then we say that $\eta$ satisfies the large deviation  principle if $\E[\eta_n] \to \infty$ as $n \to \infty$ and for every $\eps > 0$ there exists a $\delta > 0$ such that for all sufficiently large $n$, $$\mathbb{P}\big[|\eta_n - \mathbb{E}[  \eta_n]| \geq \eps \cdot  \mathbb{E}[ \eta_n] \big] \leq e^{-\delta \cdot \mathbb{E}[\eta_n]}.$$
\end{definition}

To state the first theorem, we note that we use the asymptotic notation as explained at the end of the introduction. 

\begin{theorem}\label{StoppedRWHighEnt} Let $\mu$ be a finitely supported probability measure on $G$. Let $\eta = (\eta_n)_{n \geq 1}$ be a sequence of stopping times satisfying the large deviation principle and write $L_n = \mathbb{E}[\eta_n]$ for $n\geq 1$. Let $a \geq 1$, $\eps > 0$ and let $r_n > 0$ be a sequence satisfying for all $n\geq 1$, $$r_n \leq a^{-1}c_G M_{\lceil (1 + \eps)L_n \rceil}$$ for a constant $c_G > 0$ depending only on $G$. Then for all $n \geq 1$, 
	$$H_a(q_{\eta_{n}} ; r_n) \geq h_{\mu}\cdot L_n + o_{\mu,\eta,\eps}(L_n).$$
\end{theorem}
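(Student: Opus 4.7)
The plan is: condition on $\eta_n$ to turn the smoothed entropy $H_a(q_{\eta_n}; r_n)$ into a conditional Shannon-entropy sum on the good range $\eta_n \leq K := \lceil(1+\eps')L_n\rceil$ for a small $\eps' \in (0, \eps)$, then lower bound that sum by comparing it to $H(q_K) \geq K h_\mu$ via the strong Markov property, and finally absorb the errors using the LDP.

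First, set $s = s_{a, r_n}$, which is independent of $(\eta_n, (\gamma_i)_{i\geq 1})$. Nonnegativity of mutual information gives
\begin{equation*}
H(q_{\eta_n} s) - H(s) \;\geq\; \sum_k \Prob[\eta_n = k] \bigl( H(q_k s \mid \eta_n = k) - H(s) \bigr).
\end{equation*}
For $k \leq K$, monotonicity of $M_n$ and the hypothesis on $r_n$ give $r_n \leq a^{-1} c_G M_k$; choosing $c_G$ small enough (depending only on $G$, using left-invariance of $d$ and the local bi-Lipschitz property of $\exp$) that the sets $x \exp(B_{ar_n})$ for distinct $x \in \supp{\mu^{*k}}$ are disjoint, the reasoning leading to \eqref{VeryBasicEntBound}, applied to the \emph{conditional} law of $q_k$ given $\eta_n = k$, yields $H(q_k s \mid \eta_n = k) - H(s) = H(q_k \mid \eta_n = k)$. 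For $k > K$ I keep only the trivial $H(q_k s \mid \eta_n = k) - H(s) \geq 0$. Hence $H_a(q_{\eta_n}; r_n) \geq \Sigma_n := \sum_{k \leq K} \Prob[\eta_n = k]\, H(q_k \mid \eta_n = k)$.

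To bound $\Sigma_n$ from below, I compare it to $H(q_K)$, which satisfies $H(q_K) \geq K h_\mu$ by \eqref{VeryBasicEntBound} applied at index $K$. Since $\eta_n$ is a stopping time, on $\{\eta_n = k\}$ with $k \leq K$ the strong Markov property says that $\gamma_{k+1}, \ldots, \gamma_K$ are i.i.d.\ $\mu$ and independent of $q_k$, so the chain rule yields $H(q_K \mid \eta_n = k) \leq H(q_k \mid \eta_n = k) + (K-k) H(\mu)$; for $k > K$ the crude bound $H(q_K \mid \eta_n = k) \leq K \log \abs{\supp{\mu}}$ suffices. Combining with $K h_\mu \leq H(q_K) \leq H(\eta_n) + H(q_K \mid \eta_n)$ and bounding $H(\mu) \leq \log\abs{\supp{\mu}}$,
\begin{equation*}
\Sigma_n \;\geq\; K h_\mu - H(\eta_n) - \log \abs{\supp{\mu}} \cdot \bigl( K - \E[\eta_n \mathbf{1}_{\eta_n \leq K}] \bigr).
\end{equation*}
By the maximum-entropy principle for $\N$-valued variables with given mean, $H(\eta_n) \leq H(\text{geom with mean } L_n) = O(\log L_n) = o(L_n)$. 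By Definition~\ref{EtaLDPDef} there is $\delta = \delta(\eps') > 0$ such that $\Prob[|\eta_n - L_n| > \eps' L_n] \leq e^{-\delta L_n}$ for large $n$, giving $\E[\eta_n \mathbf{1}_{\eta_n \leq K}] \geq (1-\eps')L_n(1 - e^{-\delta L_n})$ and therefore $K - \E[\eta_n \mathbf{1}_{\eta_n \leq K}] \leq 2\eps' L_n + o(L_n)$. Substituting, $\Sigma_n \geq (1+\eps')L_n h_\mu - 2\eps' L_n \log\abs{\supp{\mu}} - o(L_n) = L_n h_\mu - O_\mu(\eps') L_n - o(L_n)$; since $\eps' \in (0, \eps)$ is arbitrary, letting $\eps' \to 0$ after $n \to \infty$ yields the claim.

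The main subtlety is that the conditional entropy $H(q_k \mid \eta_n = k)$ can be strictly smaller than the marginal $H(q_k) \geq k h_\mu$ (conditioning on a stopping-time event may restrict $q_k$ significantly), so a direct summand-by-summand reduction to \eqref{VeryBasicEntBound} fails. The strong-Markov comparison of the full sum $\Sigma_n$ to the single entropy $H(q_K)$ for a cutoff $K$ slightly past $L_n$ is what circumvents this, with the LDP ensuring that the subadditivity loss $(K - \E[\min(\eta_n, K)]) \log\abs{\supp{\mu}}$ is only $O(\eps')L_n$.
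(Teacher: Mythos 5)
Your proof is correct, and although it shares the paper's overall skeleton, the key middle step is genuinely different. Both arguments use the separation hypothesis together with (the equality case of) Lemma~\ref{EntProd} to turn the smoothed entropy into a Shannon entropy of the stopped walk on the event that $\eta_n$ is not too large, then compare with $H(q_m)\geq m h_{\mu}$ at a deterministic time $m\approx L_n$, and finally let the large deviation principle absorb the loss into $O_{\mu}(\eps')L_n+o(L_n)$ before sending $\eps'\to 0$, exactly as in the paper's opening reduction. The paper implements the comparison by truncating the stopping time to $\eta_n'\in[\lfloor(1-\eps)L_n\rfloor,\lceil(1+\eps)L_n\rceil]$, splitting the law of $q_{\eta_n}$ into restricted measures via Lemmas~\ref{EntSum1} and~\ref{EntSum2}, and lower-bounding $H(q_{\eta_n'})$ through the auxiliary variable $X'=(q_{\lfloor(1-\eps)L_n\rfloor},\gamma_{\lfloor(1-\eps)L_n\rfloor+1},\dots,\gamma_{\lceil(1+\eps)L_n\rceil})$ with a fiber-counting bound on $H(X'\mid q_{\eta_n'})$. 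You instead condition on the exact value of $\eta_n$, use the stopping-time property (that $\{\eta_n=k\}$ is determined by $\gamma_1,\dots,\gamma_k$, so the later increments remain i.i.d.\ $\mu$ and independent of $q_k$ on this event) to fill the walk up to the single time $K=\lceil(1+\eps')L_n\rceil$ by the chain rule, and pay an extra $H(\eta_n)=O(\log L_n)$ via the maximum-entropy bound for integer variables of mean $L_n$ --- a term the paper never sees, but which is harmless; your bookkeeping, giving a total loss $H(\eta_n)+\log|\supp{\mu}|\,\big(K-\E[\eta_n 1_{\eta_n\leq K}]\big)$, checks out, as does the crude treatment of $\{\eta_n>K\}$. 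What each buys: the paper's counting argument avoids any explicit appeal to conditional independence at the stopping time, while your strong-Markov route avoids the auxiliary variable $X'$ and is arguably more transparent probabilistically. One cosmetic remark: the disjointness of the supports of $x s_{a,r_n}$, $x\in\supp{\mu^{*k}}$, does not require the local bi-Lipschitz property of $\exp$ (hence no smallness of $ar_n$); the one-sided bound $d(\exp(X),\Id)\leq|X|$ for the left-invariant metric already gives $d(s_{a,r_n},\Id)\leq ar_n$, so $c_G$ can be taken to be an absolute constant such as $1/3$, which is also all the paper uses.
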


Denote $$S_n = -\frac{1}{n} \log M_n \quad\quad \text{ as well as } \quad\quad S_{\mu} = \limsup_{n \to \infty} S_n.$$ In numerous concrete cases $S_{\mu}$ is finite and can be bounded efficiently. Indeed, if $G$ is a linear group and all of the entries of elements in the support of $\mu$ lie in a number field $K$ and have logarithmic height at most $L$, then  $S_{\mu} \ll_{G} L\cdot [K:\Q]$ as shown in \cite{KittleKogler2024ACSS}*{Proposition 8.10}. Under the additional assumption that $S_{\mu} < \infty$, the following corollary can be deduced from Theorem~\ref{StoppedRWHighEnt}.

\begin{corollary}\label{CorStoppedRWHighEnt} Let $\mu$ be a finitely supported probability measure on $G$ and assume that $S_{\mu} < \infty$. Let $\eta = (\eta_n)_{n \geq 1}$ be a sequence of stopping times satisfying the large deviation principle and write $L_n = \mathbb{E}[\eta_n]$ for $n\geq 1$. Suppose that $a \geq 1$ and $S > S_{\mu}$. Then for any sequence $0 < r_n < e^{-S \cdot L_n}$ as $n \to \infty$,
    $$H_a(q_{\eta_{n}} ; r_n) \geq h_{\mu}\cdot L_n + o_{\mu,\eta, a, S}(L_n).$$
\end{corollary}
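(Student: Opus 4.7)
The plan is to deduce the corollary directly from Theorem~\ref{StoppedRWHighEnt} by verifying that its hypothesis eventually holds along the given sequence $r_n$. Since $S > S_\mu$, I would first choose parameters $\eps > 0$ and $S'$ with $S_\mu < S' < S/(1+\eps)$; such a choice is possible because $S > S_\mu$ lets us pick $\eps$ small enough that $(1+\eps)S_\mu < S$, and then $S'$ any value in the open interval $(S_\mu, S/(1+\eps))$. In particular, $S - (1+\eps)S' > 0$.

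By the $\limsup$ definition of $S_\mu$, there is an $N_0$ such that $S_n < S'$ for all $n \geq N_0$, equivalently $M_n \geq e^{-S' n}$ for $n \geq N_0$. The large deviation principle assumption forces $L_n \to \infty$, so $\lceil (1+\eps) L_n \rceil \geq N_0$ for all sufficiently large $n$, and in this regime one obtains $M_{\lceil (1+\eps) L_n \rceil} \geq e^{-S'((1+\eps) L_n + 1)}$. Combining this with the hypothesis $r_n < e^{-S L_n}$, the condition $r_n \leq a^{-1} c_G M_{\lceil (1+\eps) L_n \rceil}$ of Theorem~\ref{StoppedRWHighEnt} reduces to the inequality $a e^{S'}/c_G \leq e^{(S - (1+\eps) S') L_n}$, which holds for all $n$ large enough because the exponent on the right is positive and $L_n \to \infty$.

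To deal with the finitely many initial $n$ where this inequality may fail, I would replace $r_n$ by $\tilde r_n = \min(r_n,\, a^{-1} c_G M_{\lceil (1+\eps) L_n \rceil})$, a sequence that satisfies the hypothesis for every $n \geq 1$ and agrees with $r_n$ for all large $n$. Applying Theorem~\ref{StoppedRWHighEnt} to $\tilde r_n$ yields $H_a(q_{\eta_n}; \tilde r_n) \geq h_\mu L_n + o_{\mu,\eta,\eps}(L_n)$, and since $\tilde r_n = r_n$ eventually while $\eps$ was determined from $S$ and $S_\mu$ alone, absorbing $\eps$ into the implicit constants delivers the stated bound $H_a(q_{\eta_n}; r_n) \geq h_\mu L_n + o_{\mu,\eta,a,S}(L_n)$.

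No substantial obstacle is anticipated: the entire reduction is a routine comparison of exponentials. The one point requiring care is the double role of $\eps$, which must be chosen small enough both so that $(1+\eps) S' < S$ with room to spare and so that the resulting exponential gap can eventually absorb the fixed constant $a/c_G$; fortunately a single choice of $\eps$ depending only on $S - S_\mu$ accomplishes both, and the dependence of the $o(\cdot)$-term is then as claimed.
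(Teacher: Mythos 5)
Your argument is correct and follows essentially the same route as the paper's proof: both choose $\eps$ small enough that $S > (1+\eps)S_m$ for all large $m$, deduce that $e^{-S L_n} = o\bigl(M_{\lceil (1+\eps)L_n \rceil}\bigr)$, and then invoke Theorem~\ref{StoppedRWHighEnt}. Your truncation $\tilde r_n = \min\bigl(r_n, a^{-1}c_G M_{\lceil (1+\eps)L_n \rceil}\bigr)$ to handle the finitely many initial indices is just a harmless refinement of that same reduction.
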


To provide some further context, we discuss the setting from \cite{KittleKogler2024ACSS} and \cite{KittleKogler2024Dimension} where Corollary~\ref{CorStoppedRWHighEnt} is used. A similarity is a map $g:\R^d \to \R^d$ such that there exists a scalar $\rho(g) > 0$, an orthogonal matrix $U(g) \in O(d)$ and a vector $b(g) \in \R^d$ such that $g(x) = \rho(g)U(g)x + b(g)$ for all $x \in \R^d$. Denote by $G = \mathrm{Sim}(\R^d)$ the group of similarities and let $\mu$ be a probability measure on $\R^d$ supported on finitely many contractive similarities, that is, similarities $g$ satisfying $\rho(g) \in (0,1)$. Then denote for $\kappa > 0$ by $\eta_{\kappa}$ the stopping time $\eta_{\kappa} = \inf\{ n \geq 1 \,:\, \rho(q_n) < \kappa \}$. 

As shown in \cite{KittleKogler2024ACSS}*{Lemma 3.9}, $\eta_{\kappa}$ satisfies a large deviation principle with $L_n = \mathbb{E}[\eta_n] = \frac{\log \kappa^{-1}}{|\chi_{\mu}|} + o_{\mu}(\log \kappa^{-1})$, for the logarithmic contraction rate $\chi_{\mu} = \int \log \rho(g) \, d\mu(g)$. Under the assumption that $S_{\mu} < \infty$, which is satisfied as discussed above when all of the entries of elements in the support of $\mu$ are algebraic, it therefore follows by Corollary~\ref{CorStoppedRWHighEnt} that for $S > S_{\mu}$ and $0 < r_n < e^{-S\cdot L_n}$, \begin{equation}\label{CorApplication}
H_a(q_{\eta_{\kappa}}; r_n) \geq (\log \kappa^{-1})\frac{h_{\mu}}{|\chi_{\mu}|} + o_{\mu,\eta, a, S}(\log \kappa^{-1}).\end{equation}

Results similar to \eqref{CorApplication} are an important component in the proof of the main theorems of \cite{KittleKogler2024ACSS} and \cite{KittleKogler2024Dimension}. In fact, as discussed below, we will convert estimates such as \eqref{CorApplication} into variance estimates. In \cite{KittleKogler2024ACSS} and \cite{KittleKogler2024Dimension}, these quantitative estimates will eventually allow us to apply Berry-Esseen type results to establish absolute continuity or full dimension of stationary measures. A similar approach is used in \cite{Kittle2023} to study the Furstenberg measure of $\mathrm{SL}_2(\R)$. In this paper, we provide a unified perspective on these results that can be applied to any Lie group.

We next address the second goal of this paper, which is to convert entropy estimates into variance estimates on arbitrary Lie groups. Indeed, the reader may recall that if $Z$ is an absolutely continuous random variable on $\R$ with variance $\sigma^2$ then 
\begin{equation}\label{1VarEntInequality}
    H(Z) \leq \frac{1}{2} \log(2\pi e \sigma^2),
\end{equation} where $H(Z)$ is the differential entropy of $Z$
and equality holds in \eqref{1VarEntInequality} if and only if $Z$ is distributed like a centred Gaussian with variance $\sigma^2$. We will prove an analogue of this fact on Lie groups. To do so, for random variables $g$ that are supported within small balls of a given point $g_0$ we consider the covariance matrix of the Lie group logarithm applied to $g_0^{-1}g$. This viewpoint allows us to apply a higher dimensional analogue of \eqref{1VarEntInequality} to deduce an analogous result on $G$. 

For an $\ell$-dimensional random variable $X$, we denote by $\mathrm{tr}(X)$ the trace of the covariance matrix of $X$.  In particular, we use the following definition. Given $g_0 \in G$ and a random variable $g$ on $G$ we define $$\mathrm{tr}_{g_0}(g) = \mathrm{tr}(\log(g_0^{-1}g)),$$ whenever $\log(g_0^{-1}g)$ is defined. The analogue of \eqref{1VarEntInequality}, which will be proved in Proposition~\ref{GVarianceEntIneq}, then amounts to 
\begin{equation}\label{IntroGVarIneq}
    H(g) \leq \frac{\ell}{2}\log\left( \frac{2 \pi e}{\ell} \cdot \tr_{g_0}(g) \right) + O_G(\eps)
\end{equation}
for random variables $g$ supported on $B_{\eps}(g_0)$ and $\eps > 0$ sufficiently small. 

To make the latter more useful, we can relate a certain notion of trace to entropy between scales. One defines the entropy between scales $r_1, r_2 > 0$ as 
\begin{align*}\label{DefEntScales}
    H_a(g;r_1|r_2) &= H(g; s_{r_1,a}| s_{r_2,a}) = H_a(g;r_1) - H_a(g;r_2) \\ &= (H(gs_{r_1,a}) - H(s_{r_1,a})) - ( H(gs_{r_2,a}) - H(s_{r_2,a})).
\end{align*} Roughly speaking, $H_a(g; r_1|r_2)$ measures how much more information $g$ has on scale $r_1$ than it has on scale $r_2$.

We furthermore define $\tr(g; r)$  to be the supremum of all $t\geq 0$ such that we can find some $\sigma$-algebra $\mathscr{A}$ and some $\mathscr{A}$-measurable random variable $h$ taking values in $G$ such that $$|\log(h^{-1}g)| \leq r \quad \text{ and } \quad \E[\tr_h(g|\mathscr{A})] \geq t\cdot r^2.$$ The following general result can be deduced. 

\begin{theorem}\label{MainEntropyIncrease}
    Let $g$ be a random variable taking values in $G$, let $a \geq 1$ and $r > 0$ be such that $ar$ is sufficiently small in terms of $G$ and assume that $g, s_{a,r}$ and $s_{a,2r}$ are independent random variables. Then $$\tr(g; 2ar) \gg_G a^{-2}(H_a(g; r|2r) - O_{G}(e^{-a^2/4} + a^3r)),$$ for the implied constants depending only on $G$.
\end{theorem}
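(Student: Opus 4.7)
The plan is to apply the variance-entropy inequality \eqref{IntroGVarIneq} to a carefully chosen conditional distribution on $\mathfrak{g}$, and then convert the resulting logarithmic estimate into a linear one via linearisation against an explicit reference case. I first take $\mathscr{A} = \sigma(gs_{a,2r})$ and $h = gs_{a,2r}$, so that $h$ is automatically $\mathscr{A}$-measurable, $|\log(h^{-1}g)| = |\beta_{a,2r}| \leq 2ar$, and $\tr_h(g|\mathscr{A}) = \tr(\beta_{a,2r}|\mathscr{A})$ is the trace of the conditional covariance of $\beta_{a,2r}$.

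The central construction is a Gaussian decomposition of the smoothing: I approximately write $s_{a,2r} \stackrel{d}{\approx} s_{a,r}'\tilde{s}$, with $s_{a,r}'$ distributed as $s_{a,r}$ and $\tilde{s}$ independent and distributed as the exponential of a truncated isotropic Gaussian of scale $\sqrt{3}r$, reflecting $r^2 + 3r^2 = (2r)^2$ at the level of variances. The approximation fails through two mechanisms — Gaussian tails beyond the truncation, producing total variation error $O_G(e^{-a^2/4})$, and non-abelian BCH corrections on $G$, producing entropy errors $O_G(a^3 r)$ — and these are precisely the error terms appearing in the statement. Setting $Y = gs_{a,r}'$ so that $gs_{a,2r} \approx Y\tilde{s}$, a direct differential entropy computation using the identity $H(XY) = H(X) + H(Y) - H(Y|XY)$ and the independence of $(g, s_{a,r}', \tilde{s})$ yields
\[
H_a(g; r|2r) = H(\tilde{s}|Y\tilde{s}) - H(\tilde{s}|s_{a,r}'\tilde{s}) + O_G(e^{-a^2/4} + a^3 r),
\]
whose second term is an explicit $g$-independent constant, equal in the Gaussian limit to $\tfrac{\ell}{2}\log(3\pi e r^2 / 2)$ (the entropy of the Bayesian posterior of $\tilde{\beta}$ given $\beta_{a,r}' + \tilde{\beta}$).

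I then apply Proposition~\ref{GVarianceEntIneq} slicewise to $\tilde{s}$ given $Y\tilde{s}=h$, which is supported in $B_{a\sqrt{3}r}(\mathrm{Id})$, average over $h$, and use Jensen's inequality to pull the log outside:
\[
H(\tilde{s}|\mathscr{A}) \leq \tfrac{\ell}{2}\log\bigl(\tfrac{2\pi e}{\ell}\E[\tr(\tilde{\beta}|\mathscr{A})]\bigr) + O_G(ar).
\]
Subtracting the reference form of this inequality at $g = \mathrm{Id}$ (where it is asymptotically saturated in the Gaussian limit and both conditional and marginal quantities are explicit) and exponentiating gives
\[
\E[\tr(\tilde{\beta}|\mathscr{A})] \geq \tfrac{3\ell r^2}{4}\exp\Bigl(\tfrac{2}{\ell}\bigl(H_a(g;r|2r) - O_G(e^{-a^2/4} + a^3r)\bigr)\Bigr),
\]
and the elementary bound $e^x \geq 1 + x$ then produces the excess estimate $\E[\tr(\tilde{\beta}|\mathscr{A})] - \tfrac{3\ell r^2}{4} \gg_G r^2\bigl(H_a(g;r|2r) - O_G(e^{-a^2/4} + a^3r)\bigr)$.

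The principal obstacle is the final step: converting this estimate on $\tr(\tilde{\beta}|\mathscr{A})$ into the required lower bound on $\tr(\beta_{a,2r}|\mathscr{A}) = \tr_h(g|\mathscr{A})$. Since $\beta_{a,2r} \approx \beta_{a,r}' + \tilde{\beta}$ and the two summands may be strongly negatively correlated in the posterior — indeed for $g = \mathrm{Id}$ their sum is fully determined by $\mathscr{A}$, and the covariance term exactly cancels the individual conditional variances — one cannot bound $\tr(\beta_{a,r}' + \tilde{\beta}|\mathscr{A})$ from below by $\tr(\tilde{\beta}|\mathscr{A})$ directly. Instead I will compare the excesses of $\E[\tr(\beta_{a,2r}|\mathscr{A})]$ and $\E[\tr(\tilde{\beta}|\mathscr{A})]$ over their respective reference values at $g=\mathrm{Id}$ (namely $0$ and $\tfrac{3\ell r^2}{4}$), using the law of total variance together with the prior independence of $(s_{a,r}', \tilde{s})$ to show that these excesses are comparable up to a constant depending only on $G$. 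Dividing the resulting inequality by $(2ar)^2 = 4 a^2 r^2$ then yields the claim.
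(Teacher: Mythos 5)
Your route is genuinely different from the paper's, which never decomposes $s_{a,2r}$ at all: it conditions on $gs_{a,2r}$, bounds $H(gs_{a,r}|gs_{a,2r})\geq H_a(g;r|2r)+H(s_{a,r})$ (Lemma~\ref{BasicEntropyGrowth}), applies the conditional entropy--variance inequality to the conditional law of $gs_{a,r}$, and then uses the additivity $\tr_{gs_2}(gs_1|gs_2)\leq \tr_{gs_2}(g|gs_2)+\tr_e(s_1)+O_G(\eps^3)$ (Lemma~\ref{GVarProdBound}) so that the reference value $\tr_e(s_1)$ is subtracted off \emph{inside} the logarithm and $\log(1+x)\leq x$ finishes the linearisation (Proposition~\ref{AbstractEntropyIncrease}); no comparison of ``excesses'' is ever needed. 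In your argument the step you yourself flag as the principal obstacle is a genuine gap, not a deferred routine verification. The law of total variance does reduce the excess $\E[\tr(\tilde{\beta}|\mathscr{A})]-\tfrac{3\ell r^2}{4}$ to $\E\bigl[\mathrm{tr}\,\Var\bigl(\phi\,\big|\,\mathscr{A}\bigr)\bigr]$, where $\phi$ is the posterior-mean map $w\mapsto \E[\tilde{\beta}\mid \beta_{a,r}'+\tilde{\beta}=w]$ composed with a quantity that fluctuates, given $\mathscr{A}$, exactly as much as $g$ does; but to dominate this by $\E[\tr_h(g|\mathscr{A})]$ you need $\phi$ to be Lipschitz with a constant $O_G(1)$, uniformly for the truncated-Gaussian-plus-BCH kernels you actually use. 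In the exact Gaussian model this is true ($\phi(w)=\tfrac{3}{4}w$, and the two excesses are in the fixed ratio $16/9$), so the idea is not doomed, but ``law of total variance plus prior independence'' alone does not deliver the comparison: the missing regularity lemma for the posterior mean of the truncated kernel is precisely where the implied constant of the theorem would come from, and it is nowhere supplied.

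There is a second, quieter problem: the replacement $s_{a,2r}\approx s_{a,r}'\tilde{s}$ is only total-variation close (error $O_G(e^{-a^2/4})$ from the tails), and differential entropy is not continuous in total variation, so the identity $H_a(g;r|2r)=H(\tilde{s}|Y\tilde{s})-H(\tilde{s}|s_{a,r}'\tilde{s})+O_G(e^{-a^2/4}+a^3r)$ does not follow from TV-closeness alone. It can be repaired by a mixture decomposition with density bounds, but the discrepancy components have differential entropies of size roughly $\ell\log(1/r)$, so the error becomes of the shape $e^{-a^2/4}\log(1/r)$, which is not within the stated $O_G(e^{-a^2/4}+a^3r)$ uniformly in $r$ (the theorem makes no assumption tying $r$ to $a$). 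The paper's proof avoids both difficulties: the only place $e^{-a^2/4}$ enters is the exact computation of $H(s_{a,r})$ and $\tr_e(s_{a,r})$ in Lemma~\ref{VarEntProperties}, and the Gaussian extremality input is used once, through Proposition~\ref{GVarianceEntIneq} in its conditional form, with no approximate decomposition of the smoothing.
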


The conditional distribution $(g|\mathscr{A})$ becomes relevant as we can relate the entropy between scales $H_a(g; r|2r)$ to the conditional entropy $H(g s_{a,r_1}| g s_{a,r_1})$ as shown in Lemma~\ref{BasicEntropyGrowth} and Proposition~\ref{AbstractEntropyIncrease}.

In our applications (\cite{Kittle2023}, \cite{KittleKogler2024ACSS}, \cite{KittleKogler2024Dimension}) it is desirable to convert an entropy gap into lower bounds of $\sum_{i=1}^m \tr (g ; s_i)$ for a sequence of scales $s_1, \dots, s_{m} \in (r_1, r_2)$. It will be also useful to assume that our scales satisfy $s_{i + 1} \geq A s_i$ for some $A > 0$. Indeed, the latter condition is necessary to make the variance summation method (\cite{Kittle2023}, \cite{KittleKogler2024ACSS}, \cite{KittleKogler2024Dimension}) applicable, that is to combine variance bounds of stopped random walks on different scales.

Using Theorem~\ref{MainEntropyIncrease}, we deduce the following general proposition applied in \cite{KittleKogler2024ACSS} and \cite{KittleKogler2024Dimension}. 

\begin{proposition} \label{EntropyGapToTrSum}
    Let $g$ be a $G$-valued random variable independent of $(s_{a,r})_{a \geq 1, r > 0}$ and let $0 < r_1 < r_2$. Let $a \geq 1$ such that $ar_2$ is sufficiently small in terms of $G$. Suppose that for all $r_1' \in [r_1, 2r_1]$ as well as $r_2' \in [r_2/2, 2r_2]$ it holds for some constant $C> 0$ that $$H_a(g; r_1'| r_2') \geq C.$$ Let $A > 1$. Then there exists $s_1, \dots, s_{m} \in (ar_1, 4ar_2)$ where $m = \lceil \frac{\log 4ar_2 - \log ar_1}{2\log A} \rceil$ such that for $N = \left\lceil \frac{\log r_2 - \log r_1}{\log 2} \right\rceil - 1$,
    \begin{equation*}
        \sum_{i=1}^m \tr (g ; s_i) \gg_G \frac{C - N\cdot O_G(e^{-\frac{a^2}{4}} + a^3r_2)}{a^2 \log A}
    \end{equation*}
    and $s_{i+1} \geq A s_i$ for all $1 \leq i \leq m-1$.
\end{proposition}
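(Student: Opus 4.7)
The plan is to telescope the entropy hypothesis along a dyadic chain of scales, apply Theorem~\ref{MainEntropyIncrease} at each dyadic step to convert entropy gaps into trace contributions, and then subsample by pigeonhole to enforce the spacing condition $s_{i+1} \geq A s_i$.

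Set $r_1' = r_1$ and $r_2' = 2^N r_1$; since $N = \lceil (\log r_2 - \log r_1)/\log 2 \rceil - 1$, one has $r_2' \in [r_2/2, r_2] \subset [r_2/2, 2r_2]$, so the hypothesis gives $H_a(g;r_1 \mid 2^N r_1) \geq C$. Using $H_a(g;\cdot \mid \cdot) = H_a(g;\cdot) - H_a(g;\cdot)$, this telescopes into $\sum_{k=0}^{N-1} H_a(g; 2^k r_1 \mid 2^{k+1} r_1) \geq C$. For each $k \in \{0,\ldots,N-1\}$, the quantity $a \cdot 2^k r_1 \leq a r_2$ is sufficiently small in $G$, so Theorem~\ref{MainEntropyIncrease} applied with $r = 2^k r_1$ yields
\begin{equation*}
    \tr(g; 2a \cdot 2^k r_1) \gg_G a^{-2}\bigl(H_a(g; 2^k r_1 \mid 2^{k+1} r_1) - O_G(e^{-a^2/4} + a^3 \cdot 2^k r_1)\bigr).
\end{equation*}
Summing over $k$ and using $\sum_k 2^k r_1 \leq 2 r_2$ to absorb all errors into $N \cdot O_G(e^{-a^2/4} + a^3 r_2)$ produces
\begin{equation*}
    \sum_{k=0}^{N-1} \tr(g; 2a \cdot 2^k r_1) \gg_G a^{-2}\bigl(C - N \cdot O_G(e^{-a^2/4} + a^3 r_2)\bigr).
\end{equation*}

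To enforce the spacing, set $L = \lceil \log_2 A \rceil$ so that any two dyadic scales whose indices differ by $L$ have ratio $2^L \geq A$, and partition $\{0,\ldots,N-1\}$ into the $L$ residue classes modulo $L$. By pigeonhole, at least one class $K_{\tau^*}$ supports a $1/L$ fraction of the preceding sum. Listing $\{2a \cdot 2^k r_1 : k \in K_{\tau^*}\}$ in increasing order furnishes scales in $(ar_1, 4ar_2)$ whose successive ratios are $\geq A$. The count $|K_{\tau^*}| \leq \lceil N/L \rceil$ is within a bounded factor of the prescribed $m = \lceil \log(4r_2/r_1)/(2\log A) \rceil$, so retaining the $m$ scales with the largest trace contributions preserves at least an $m/|K_{\tau^*}|$ fraction of the sum (since the $m$ largest of $n$ non-negatives contribute at least an $m/n$ share of the total). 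Combined with $L \asymp \log A$, this produces the claimed lower bound.

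The only genuinely nontrivial step is this final pigeonhole subsampling: Theorem~\ref{MainEntropyIncrease} natively delivers trace contributions on a dyadic (ratio~$2$) ladder, whereas the proposition demands scales spaced by at least $A$. The $\log A$ factor in the denominator is precisely the price of bridging those two spacings, and every other step is a routine telescoping or summation.
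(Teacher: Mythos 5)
There is a genuine gap in the final subsampling step, and it is not repairable within your framework. Your entire trace input lives on the fixed dyadic ladder $2a\cdot 2^k r_1$, $k=0,\dots,N-1$, whose total you bound below by $T \gg_G a^{-2}\bigl(C - N\cdot O_G(e^{-a^2/4}+a^3r_2)\bigr)$. Any subset of that ladder has sum at most what the ladder provides, so the best you can extract is (a constant times) $T$. But the proposition claims $\gg_G \frac{C - N\cdot O_G(\cdots)}{a^2\log A}$ for \emph{every} $A>1$, and for $1<A<2$ the factor $\frac{1}{\log A}$ blows up: your pigeonhole parameter is $L=\lceil \log_2 A\rceil = 1$, so "$L\asymp \log A$" fails, the pigeonhole gains nothing, and the target bound exceeds the only lower bound you have established for the whole ladder. (Relatedly, for small $\log A$ the required number of scales $m=\lceil\frac{\log(4r_2/r_1)}{2\log A}\rceil$ is much larger than $N$, so the ladder cannot even supply $m$ scales carrying proven trace mass.) A symptom of the missing idea is that you only use the hypothesis at the single pair $(r_1, 2^N r_1)$, whereas it is stated for all $r_1'\in[r_1,2r_1]$ and $r_2'\in[r_2/2,2r_2]$.

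The paper's proof uses exactly that extra freedom: it runs your telescoping argument along the \emph{shifted} ladders $2^{i}ur_1$ for every $u\in[1,2)$ and integrates over $u$, turning Theorem~\ref{MainEntropyIncrease} into a lower bound on the logarithmic integral $\int_{ar_1}^{4ar_2}\frac{1}{u}\tr(g;u)\,du \gg_G a^{-2}(C - N\cdot O_G(\cdots))$ (Proposition~\ref{trIntBound}), so the trace mass is spread over a continuum of scales rather than $N$ discrete ones. Then (Proposition~\ref{LotsOfTrace}) it partitions $(ar_1,4ar_2)$ into consecutive blocks of log-length $\log A$, keeps the better of the two alternating families, and in each retained block picks a near-supremum point; since the integral over a block is at most its sup times $\log A$, each chosen point captures a $\frac{1}{2\log A}$ share of the local integral, and the alternation guarantees $s_{i+1}\geq A s_i$. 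This is where the $\frac{1}{\log A}$ factor legitimately comes from, uniformly in $A>1$: as $A\downarrow 1$ one selects more points, not a larger share per point. Your argument, by contrast, is essentially correct (up to constants, and up to padding when the chosen residue class has fewer than $m$ elements) only in the regime $A\geq 2$; to cover all $A>1$ you need the averaging over the shift $u$ and the resulting integral formulation.
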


    We comment on the structure of this paper. After discussing basic properties of entropy in section~\ref{EntropySection}, we prove Theorem~\ref{StoppedRWHighEnt} and Corollary~\ref{CorStoppedRWHighEnt} in section~\ref{EntropyGrowth}. In section~\ref{EntropyTraceSection} we show \eqref{IntroGVarIneq} and Theorem~\ref{MainEntropyIncrease}. Finally, we prove Proposition~\ref{EntropyGapToTrSum} in section~\ref{EntGapTrSumSection}.

    \subsection*{Notation}

    We use the asymptotic notation $A \ll B$ or $A = O(B)$ to denote that $|A| \leq CB$ for a constant $C > 0$. If the constant $C$ depends on additional parameters we add subscripts. Moreover, $A \asymp B$ denotes $A \ll B$ and $B \ll A$. For a sequence $A_n$ and $B_n$ we write $A_n = o(B_n)$ to denote that $A_n/B_n \to 0$ as $n \to \infty$ and we add subscripts to indicate that the speed of convergence depends on certain parameters. 

    We write $[n] = \{ 1, \ldots , n \}$.

    It is assumed throughout this paper that the random variables $(s_{a,r})_{a \geq 1, r > 0}$ as defined in \eqref{sardef} are independent. Whenever $g$ will denote a $G$-valued random variable, it will be assumed that $g$ is independent of $(s_{a,r})_{a \geq 1, r > 0}$. Given a probability measure $\mu$, we sample independent $\mu$-distributed random variables $\gamma_1, \gamma_2, \ldots $ that are independent from $(s_{a,r})_{a \geq 1, r > 0}$.

    \subsection*{Acknowledgement} The first-named author gratefully acknowledges support from the Heilbronn Institute for Mathematical Research. This work is part of the second-named author's PhD thesis conducted at the University of Oxford, supported by a Mathematical Institute Scholarship. We thank the anonymous referee for helpful comments and corrections.

    \section{Basic Properties of Entropy on Lie groups} \label{EntropySection}

In section~\ref{Entropy:Basic} we give definitions and discuss basic properties of entropy on $G$, after which we discuss the Kullback-Leibler divergence on a general measurable space $X$ in section~\ref{Entropy:KL}. In section~\ref{RegConSection} we review regular conditional distributions in order to study conditional entropy and conditional trace in section~\ref{FirstEntropy:Conditional} and section~\ref{EntropyTraceSection}.

\subsection{Entropy and Basic Properties}\label{Entropy:Basic}

For notational convenience, we denote for $x \in [0,\infty)$ by $$h(x) = \begin{cases}  -x\log(x) &\text{if } x > 0 \\ 0 & \text{if } x =0\end{cases}$$ and recall that $h$ is concave. If $\lambda = \sum_{i} p_i \delta_{g_i}$ is a discrete probability measure on $G$, we define the Shannon entropy of $\lambda$ as $$H(\lambda) = \sum_{i} h(p_i).$$ On the other hand, given an absolutely continuous probability measure $\lambda$ on $G$ with density $f_{\lambda}$  we define 
$$H(\lambda) =  \int h(f_{\lambda})  \, d\Haarof{G}.$$ We extend the definition to finite positive measures $\lambda$ that are either absolutely continuous or discrete by setting $$H(\lambda) = ||\lambda||_1 H(\lambda/||\lambda||_1), \quad \quad \text{ where} \quad \quad ||\lambda||_1 = \lambda(G).$$ In this subsection we collect some useful basic properties of entropy. 

\begin{lemma}\label{EntSum1}
    Let $\lambda_1, \ldots , \lambda_n$ be absolutely continuous finite measures on $G$. Then $$H(\lambda_1 + \ldots + \lambda_n) \geq H(\lambda_1) + \ldots + H(\lambda_n).$$
\end{lemma}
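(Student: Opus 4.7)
The plan is to reduce to a statement about probability measures and then invoke concavity of $h(x) = -x\log(x)$; the extension $H(\lambda) = ||\lambda||_1 H(\lambda/||\lambda||_1)$ is designed precisely so that this reduction is clean.

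Let $\lambda = \sum_{i=1}^n \lambda_i$ and let $f_i$ denote the density of $\lambda_i$, so that $\lambda$ has density $f = \sum_i f_i$. We may assume $||\lambda||_1 > 0$, otherwise every $\lambda_i$ is zero and the claim is trivial. Set $p_i = ||\lambda_i||_1/||\lambda||_1$, so that $\sum_i p_i = 1$, and let $g_i = f_i/||\lambda_i||_1$ be the density of the probability measure $\lambda_i/||\lambda_i||_1$. Then the density of the probability measure $\lambda/||\lambda||_1$ is exactly $\sum_i p_i g_i$, and concavity of $h$ combined with $\sum_i p_i = 1$ yields the pointwise Jensen inequality
\[
h\!\left( \sum_i p_i\, g_i(x) \right) \;\geq\; \sum_i p_i\, h(g_i(x)).
\]
Integrating against $m_G$ gives $H(\lambda/||\lambda||_1) \geq \sum_i p_i\, H(\lambda_i/||\lambda_i||_1)$, and multiplying through by $||\lambda||_1$ collapses this to $H(\lambda) \geq \sum_i ||\lambda_i||_1\, H(\lambda_i/||\lambda_i||_1) = \sum_i H(\lambda_i)$, which is the desired inequality.

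The argument is essentially formal, so I do not anticipate a real obstacle. The only bookkeeping is to ensure both sides are well-defined in the extended-real sense: since each $f_i \leq f$ pointwise and $h$ is bounded above on any bounded interval, the negative parts of the integrands are controlled uniformly in $i$, so the pointwise Jensen inequality can be integrated without issue, with the usual convention $h(0) = 0$ handling the points where some $g_i$ vanishes.
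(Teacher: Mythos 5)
Your proof is correct and follows essentially the same route as the paper: both apply concavity of $h$ to the convex combination $\sum_i p_i g_i$ of the normalized densities with weights $p_i = ||\lambda_i||_1/||\lambda||_1$, then integrate and rescale. The only cosmetic difference is that you handle general $n$ directly, whereas the paper reduces to $n=2$.
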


\begin{proof}
    It suffices to prove the claim for $n = 2$. Let $f_1$ and $f_2$ be the densities of $\lambda_1$ and $\lambda_2$. Then since $h$ is concave 
    \begin{align*}
        H(\lambda_1 + \lambda_2) &= (||\lambda_1||_1 + ||\lambda_2||_1) \int h\left( \frac{f_1 + f_2}{||\lambda_1||_1 + ||\lambda_2||_1}   \right) \, d\Haarof{G} \\
        &\geq (||\lambda_1||_1 + ||\lambda_2||_1) \int \frac{||\lambda_1||_1}{||\lambda_1||_1 + ||\lambda_2||_1}h\left( \frac{f_1}{||\lambda_1||_1} \right) d\Haarof{G} \\ &+ (||\lambda_1||_1 + ||\lambda_2||_1) \int \frac{||\lambda_2||_1}{||\lambda_1||_1 + ||\lambda_2||_1}h\left( \frac{f_2}{||\lambda_2||_1} \right) d\Haarof{G} \\
        &= H(\lambda_1) + H(\lambda_2).
    \end{align*}
\end{proof}

\begin{lemma}\label{EntSum2}
    Let $p = (p_1,p_2,\ldots)$ be a probability vector and let $\lambda_1, \lambda_2 , \ldots $ be probability measures on $G$ either all absolutely continuous measures or all discrete measures with finite entropy such that $||\lambda_i||_1 = p_i$. Then 
    \begin{align*}
        H\left( \sum_{i = 1}^{\infty}  \lambda_i \right) &\leq  H(p) + \sum_{i = 1}^{\infty}  H(\lambda_i).
    \end{align*} In particular, if $p_i = 0$ for all $i > k$ for some $k \geq 1$ then $$H\left( \sum_{i = 1}^k \lambda_i \right) \leq  \log k +  \sum_{i = 1}^k  H(\lambda_i).$$
\end{lemma}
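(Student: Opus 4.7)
The plan is to reduce to the standard mixture entropy bound by introducing an auxiliary discrete label. For each $i$ with $p_i > 0$, write $\tilde\lambda_i = \lambda_i / p_i$, which is a probability measure of the same type as $\lambda_i$, and for which $H(\lambda_i) = p_i H(\tilde\lambda_i)$ by the definition of entropy on finite measures. Setting $\mu = \sum_i \lambda_i = \sum_i p_i \tilde\lambda_i$, the claim is equivalent to
\begin{equation*}
    H(\mu) \;\leq\; H(p) + \sum_{i \geq 1} p_i H(\tilde\lambda_i),
\end{equation*}
with summands corresponding to $p_i = 0$ contributing nothing on either side.

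To prove this I would realise $\mu$ as the marginal law of $Y$ in a pair $(Y, I)$, where $I$ is $\N$-valued with $\mathbb{P}(I = i) = p_i$ and, conditionally on $I = i$, $Y$ has law $\tilde\lambda_i$. Let $\tilde f_i$ denote the density of $\tilde\lambda_i$ with respect to $\Haarof{G}$ in the absolutely continuous case, or with respect to counting measure in the discrete case; and let $g = \sum_i p_i \tilde f_i$ be the corresponding density of $\mu$. Then $(Y, I)$ has joint density $(y, i) \mapsto p_i \tilde f_i(y)$, and a direct computation yields
\begin{equation*}
    H(Y, I) \;=\; -\sum_i \int p_i \tilde f_i(y) \log\bigl(p_i \tilde f_i(y)\bigr)\, dm(y) \;=\; H(p) + \sum_i p_i H(\tilde\lambda_i),
\end{equation*}
which is exactly the right-hand side we want.

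A direct manipulation of densities then gives the chain rule $H(Y, I) = H(Y) + H(I \mid Y)$, and the conditional entropy
\begin{equation*}
    H(I \mid Y) \;=\; \sum_i \int p_i \tilde f_i(y) \log\!\bigl(g(y)/(p_i \tilde f_i(y))\bigr)\, dm(y)
\end{equation*}
is non-negative, since the pointwise inequality $p_i \tilde f_i \leq g$ forces the logarithm to be $\geq 0$ wherever $p_i \tilde f_i > 0$. Combining these gives $H(\mu) = H(Y) \leq H(Y, I)$, which is the desired bound. The ``in particular'' statement is then immediate: if $p_i = 0$ for $i > k$ then $H(p) \leq \log k$ and $H(\lambda_i) = 0$ for $i > k$. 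I do not anticipate a deep obstacle here; the only point that requires care is verifying that the chain rule and the non-negativity of the conditional-entropy integral go through uniformly in both the all-discrete and all-absolutely-continuous cases, which is straightforward once the joint density above is written down explicitly.
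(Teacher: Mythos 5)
Your proof is correct and is essentially the paper's argument in information-theoretic packaging: the non-negativity of $H(I\mid Y)$, which you verify from the pointwise bound $p_i\tilde f_i\le g$, is exactly the subadditivity $h\bigl(\sum_i a_i\bigr)\le\sum_i h(a_i)$ of $h(x)=-x\log x$ that the paper applies directly to the densities, and your computation of $H(Y,I)$ is the same splitting $\int h(p_i\tilde f_i)\,d\Haarof{G}=h(p_i)+p_iH(\tilde\lambda_i)$. So the two proofs coincide up to the auxiliary label $I$ and the chain rule, which the paper bypasses by working with the densities directly.
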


\begin{proof}
    We only consider the case of absolutely continuous measures as the proof is analogous in the discrete case. Denote the densities of $\lambda_i$ by $f_i$. Note that $h(\sum_{i = 1}^{\infty} a_i) \leq \sum_{i = 1}^{\infty} h(a_i)$ for any $a_1, a_2 ,\ldots \geq 0$. Therefore 
    \begin{align*}
        H\left( \sum_{i = 1}^{\infty}  \lambda_i \right) &= \int h\left( \sum_{i = 1}^{\infty} f_i \right) \, d\Haarof{G} \\
        &\leq \sum_{i = 1}^\infty \int h(f_i) \, d\Haarof{G} \\
        &= \sum_{i = 1}^\infty \int (-f_i(x)\log(p_{i}^{-1}f_i) -f_i(x)\log(p_i)) \, d\Haarof{G} \\
        &=  \sum_{i = 1}^\infty \int p_i h(p_{i}^{-1}f_i) d\Haarof{G} + h(p_i) \\
        &= H(p) + \sum_{i = 1}^\infty H(\lambda_i). 
    \end{align*} 
\end{proof}

\begin{lemma}\label{EntProd}
    Let $\lambda_1$ be a discrete and $\lambda_2$ be an absolutely continuous probability measure on $G$. Then $$H(\lambda_1 * \lambda_2) \leq H(\lambda_1) + H(\lambda_2)$$ Suppose further that $\lambda_1$ is supported on finitely many points with separation at least $2r$ and that the support of $\lambda_2$ is contained in a ball of radius $r$. Then
    $$H(\lambda_1 * \lambda_2) = H(\lambda_1) + H(\lambda_2).$$
\end{lemma}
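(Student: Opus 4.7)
The plan is to reduce both parts to Lemma~\ref{EntSum2} combined with the observation that, on $G$, translation by a fixed group element preserves differential entropy. Write $\lambda_1 = \sum_i p_i \delta_{g_i}$ and let $f$ denote the density of $\lambda_2$ with respect to $\Haarof{G}$. By left-invariance of $\Haarof{G}$, a standard change of variables shows that $(g_i)_*\lambda_2$ has density $x \mapsto f(g_i^{-1}x)$; consequently $\lambda_1 * \lambda_2$ is absolutely continuous with density
\[ f_*(x) = \sum_i p_i\, f(g_i^{-1}x). \]

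For the inequality, I would decompose $\lambda_1 * \lambda_2 = \sum_i \nu_i$ where $\nu_i = p_i \cdot (g_i)_*\lambda_2$ has total mass $p_i$, and apply Lemma~\ref{EntSum2} to obtain $H(\lambda_1 * \lambda_2) \le H(p) + \sum_i H(\nu_i)$. The first term equals $H(\lambda_1)$ by the definition of Shannon entropy, and by the definition of entropy for finite measures together with left-invariance of $\Haarof{G}$, one has $H(\nu_i) = p_i\, H((g_i)_*\lambda_2) = p_i\, H(\lambda_2)$. Summing and using $\sum_i p_i = 1$ gives the claimed inequality.

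For the equality statement, the separation hypothesis together with the left-invariance of $d$ (so that $g_i B_r = B_r(g_i)$) implies that the translated supports $g_i \cdot \supp{\lambda_2} \subseteq B_r(g_i)$ are pairwise disjoint. Hence at every point of $G$ at most one summand of $f_*$ is nonzero, so the integral $-\int f_* \log f_* \, d\Haarof{G}$ splits as a sum over $i$ of integrals on the disjoint regions $g_i B_r$. Substituting $y = g_i^{-1}x$ on each piece and using $\int f\, d\Haarof{G} = 1$ and left-invariance of $\Haarof{G}$ evaluates each piece to $-p_i \log p_i + p_i\, H(\lambda_2)$, and summing gives $H(\lambda_1 * \lambda_2) = H(\lambda_1) + H(\lambda_2)$.

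The only real subtlety is verifying the density formula for convolution with a point mass, which is a direct consequence of the left-invariance of $\Haarof{G}$; once that is in hand, the inequality is immediate from Lemma~\ref{EntSum2} and the equality reduces to splitting one integral over disjoint supports.
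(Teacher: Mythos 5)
Your proof is correct and follows essentially the same route as the paper: both rest on the density formula $\sum_i p_i\, f\circ g_i^{-1}$ for $\lambda_1*\lambda_2$, a subadditivity-of-$h$ bound (which you obtain by citing Lemma~\ref{EntSum2}, whose proof is exactly the computation the paper carries out inline), and the observation that the separation hypothesis makes the translated supports disjoint so the inequality becomes an equality. The only difference is cosmetic: you route the inequality through Lemma~\ref{EntSum2} and make the translation-invariance of entropy explicit, while the paper redoes that calculation directly.
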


\begin{proof}
    Write $\lambda_1 = \sum_{i = 1}^n p_i \delta_{g_i}$ and let $f$ be the density of $\lambda_2$. Then the density of $\lambda_1 * \lambda_2$ is given by $\sum_{i = 1}^n p_i \, f \circ g_i^{-1}$. As $h(\sum_{i = 1}^n a_i) \leq \sum_{i = 1}^n h(a_i)$ for any $a_1, \ldots , a_n \geq 0$, 
    \begin{align*}
        H(\lambda_1 * \lambda_2) &= \int h\left(\sum_{i = 1}^n p_i \, f \circ g_i^{-1}\right) \, d\Haarof{G} \\ &\leq \sum_{i = 1}^n \int h(p_i \, f \circ g_i^{-1}) \, d\Haarof{G} \\
        &= \sum_{i = 1}^n \int (p_i \, f \circ g_i^{-1}) (\log(p_i) + \log( f \circ g_i^{-1})) \, d\Haarof{G} \\
        &= H(\lambda_1) + H(\lambda_2).
    \end{align*} If $\lambda_1$ is supported on finitely many points with separation at least $2r$ and that the support of $\lambda_2$ is contained in a ball of radius $r$, then the support of the functions $f \circ g_i^{-1}$ is disjoint and the inequality in the second line is an equality.
\end{proof}

\subsection{Kullback-Leibler Divergence} \label{Entropy:KL} In this section we discuss Kullback-Leibler divergence on general measurable spaces $X$. If $\nu \ll \mu$ are measures on $X$, then we define the Kullback-Leibler divergence as $$D_{\mathrm{KL}}(\nu \, || \, \mu) = -\int \log \frac{d\nu}{d\mu} \, d\nu.$$ Observe that if $\nu$ is absolutely continuous on $G$ with respect to the Haar measure $\Haarof{G}$, then $H(\nu) = D_{\mathrm{KL}}(\nu \, || \, \Haarof{G})$. We collect some basic results on the Kullback-Leibler divergence on $X$.

\begin{lemma}\label{KLSupportBound}
    Let $\nu \ll \mu$ be measures on $G$ and assume that $\nu$ is a probability measure supported on a set $A$ of positive $\mu$ measure. Then $$D_{\mathrm{KL}}(\nu \, || \, \mu) \leq \log(\mu(A)).$$
\end{lemma}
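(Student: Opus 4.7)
The plan is to apply Jensen's inequality to the concave function $h(x) = -x\log x$ introduced in Section 2.1. I assume first that $\nu$ is a probability measure; the case of a general finite $\nu$ reduces to this one by passing to $\nu/\|\nu\|_1$ and rescaling via the convention $H(\lambda) = \|\lambda\|_1 H(\lambda/\|\lambda\|_1)$. Write $f = d\nu/d\mu$; the support hypothesis gives $f = 0$ off $A$ and $\int_A f\,d\mu = \nu(X) = 1$, so by definition
\[
D_{\mathrm{KL}}(\nu\,||\,\mu) = -\int_A f \log f \, d\mu = \int_A h(f) \, d\mu.
\]

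Now I apply Jensen's inequality to the concave function $h$ with respect to the probability measure $\mu|_A/\mu(A)$ obtained by normalising $\mu$ on $A$:
\[
\frac{1}{\mu(A)}\int_A h(f) \, d\mu \;\leq\; h\!\left(\frac{1}{\mu(A)}\int_A f \, d\mu\right) \;=\; h\!\left(\frac{1}{\mu(A)}\right) \;=\; \frac{\log \mu(A)}{\mu(A)}.
\]
Multiplying through by $\mu(A) > 0$ yields $D_{\mathrm{KL}}(\nu\,||\,\mu) \leq \log \mu(A)$, which is the claimed bound.

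There is no real obstacle beyond identifying the right application of Jensen: the hypothesis that $\nu$ concentrates on $A$ is exactly what allows one to normalise $\mu|_A$ into a probability measure and thereby exploit concavity of $h$. Note that equality is attained precisely when $f$ is $\mu$-a.e.\ constant on $A$, i.e.\ when $\nu = \mu|_A/\mu(A)$ is the uniform distribution on $A$, which recovers the familiar fact that among measures supported on $A$ the normalised restriction of $\mu$ maximises the (sign-flipped) KL divergence. An equivalent route is to invoke Gibbs' inequality applied to the standard relative entropy of $\nu$ against $\mu|_A/\mu(A)$; non-negativity of that quantity rearranges immediately to the same bound.
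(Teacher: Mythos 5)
Your proof is correct and follows essentially the same route as the paper: both arguments reduce to Jensen's inequality for the concave function $h(x)=-x\log x$ against the normalised restriction $\mu|_A/\mu(A)$, the only cosmetic difference being that the paper factors $\mu(A)$ inside $h$ before applying Jensen while you apply Jensen directly and multiply back by $\mu(A)$. (Like the paper, you treat $\nu$ as a probability measure, which is the intended setting.)
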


\begin{proof}
    For convenience write $d\nu = f_{\nu}\, d\mu$. Then by Jensen's inequality,
    \begin{align*}
        D_{\mathrm{KL}}(\nu \, || \, \mu) = \int_A h\left(f_{\nu} \frac{\mu(A)}{\mu(A)} \right) \, d\mu = \int h(f_{\nu}\mu(A)) \frac{1_A}{\mu(A)} \, d\mu + \log(\mu(A)) \leq  \log(\mu(A)).
    \end{align*}
\end{proof}

\begin{lemma}\label{EntProductBound}
    Assume that we can write $X = X_1 \times \ldots \times X_m$ as a product of sub-manifolds $X_i\subset X$ and assume that $\Haarof{X} = \Haarof{X_1} \times \ldots \times \Haarof{X_m}$ for a measure $\Haarof{X}$ on $X$ and measures $\Haarof{X_i}$ on $X_i$. Let $\mu$ be a probability measure on $X$ with $\mu \ll \Haarof{G}$. Denote by $\pi_i$ the projection from $X$ to $X_i$ and by $\pi_i \mu$ the pushforward of $\mu$ under $\pi_i$. Then $$D_{\mathrm{KL}}(\mu \, || \, \Haarof{X}) \leq D_{\mathrm{KL}}(\pi_1\mu \,||\, \Haarof{X_1}) + \ldots + D_{\mathrm{KL}}(\pi_m\mu \,||\, \Haarof{X_m}).$$
\end{lemma}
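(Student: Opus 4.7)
The plan is to apply Jensen's inequality to the log-ratio of $\mu$ against the product of its own marginal densities. First I would reduce to $m = 2$ by straightforward induction on $m$, grouping $X_2 \times \cdots \times X_m$ into a single factor at each step, which is compatible with the product decomposition of the Haar measure by associativity. So assume $m = 2$. Write $f = d\mu/d\Haarof{X}$ and $f_i = d\pi_i\mu/d\Haarof{X_i}$ for $i = 1, 2$, and observe that the product $\pi_1\mu \times \pi_2\mu$ is itself a probability measure on $X$ with density $g(x_1,x_2) := f_1(x_1) f_2(x_2)$ relative to $\Haarof{X} = \Haarof{X_1}\times\Haarof{X_2}$.

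Since $\log$ is concave and $\mu$ is a probability measure, Jensen's inequality applied to $g/f$ yields
$$\int \log\frac{g}{f}\,d\mu \;\leq\; \log \int \frac{g}{f}\,d\mu \;=\; \log \int g \, d\Haarof{X} \;=\; 0,$$
equivalently $\int \log f\,d\mu \geq \int \log g\,d\mu$. Since $\log g(x_1, x_2) = \log f_1(x_1) + \log f_2(x_2)$, pushing each summand forward to the corresponding marginal gives
$$\int \log g\,d\mu \;=\; \int \log f_1\,d\pi_1\mu + \int \log f_2\,d\pi_2\mu.$$
Multiplying by $-1$ and invoking the paper's convention $D_{\mathrm{KL}}(\nu \,||\, \lambda) = -\int \log(d\nu/d\lambda)\,d\nu$ produces exactly the claimed inequality.

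The only non-trivial bookkeeping is verifying that $\mu \ll \pi_1\mu \times \pi_2\mu$ so that $g/f$ is finite $\mu$-almost everywhere; this follows from a Fubini argument showing that $f > 0$ forces $f_1, f_2 > 0$ on the projected supports, up to $\Haarof{X}$-null sets. I anticipate no real obstacle beyond this, as the argument is essentially the standard subadditivity of entropy transplanted into the paper's (sign-reversed) KL-divergence formalism, and handling the case where the right-hand side diverges is trivial.
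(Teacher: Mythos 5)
Your proof is correct, and it takes a genuinely different (though equally standard) route to the same Jensen-type estimate. The paper factors the joint density as conditional times marginal, writing $f_{\mu} = \tfrac{f_{\mu}}{f^2_{\mu}}\cdot f^2_{\mu}$, and applies Jensen to the concave function $h(x) = -x\log x$ averaged against the marginal weight $f^2_{\mu}\,d\Haarof{X_1}$; in effect this is the chain rule ``entropy of $\mu$ equals conditional entropy plus marginal entropy'' followed by concavity to bound the conditional term by the entropy of the other marginal. You instead compare $\mu$ directly with the product of its own marginals and use Gibbs' inequality (nonnegativity of the usual relative entropy, i.e.\ of mutual information), applying Jensen to $\log$ against $\mu$. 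Both arguments are short and both generalise to $m$ factors by the same grouping induction; yours is the textbook mutual-information formulation and concentrates the measure-theoretic bookkeeping in the single claim $\mu \ll \pi_1\mu \times \pi_2\mu$ (which your Fubini remark settles, since $f_i\circ\pi_i > 0$ holds $\mu$-almost everywhere), whereas the paper's version never divides by the product-of-marginals density and stays within the functional $h$ used throughout its Section 2. One cosmetic correction: in your displayed chain the middle equality should be an inequality, because $\int (g/f)\,d\mu = \int_{\{f>0\}} g\,d\Haarof{X} \leq \int g\,d\Haarof{X} = 1$; the conclusion $\int \log(g/f)\,d\mu \leq 0$ is unaffected, and the paper is no more careful than you are about the (harmless) $\infty - \infty$ integrability caveats.
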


\begin{proof}
    It suffices to prove the claim for $m = 2$. Denote by $f_{\mu}$ the density of $\mu$ with respect to $\Haarof{X}$ and write $$f_{\mu}^1(x_2) = \int f_{\mu}(x_1,x_2) \, d\Haarof{X_1}(x_1)  \quad \text{ and } \quad f^2_{\mu}(x_1) = \int f_{\mu}(x_1,x_2) \, d\Haarof{X_2}(x_2).$$  Therefore, 
    \begin{align*}
        D_{\mathrm{KL}}(\mu \, || \, \Haarof{X}) &= \int\int h(f_{\mu}(x_1,x_2)) \, d\Haarof{X_1}(x_1)d\Haarof{X_2}(x_2) \\
        &= \int\int h\left(\frac{f_{\mu}(x_1,x_2)}{f_{\mu}^2(x_1)}f_{\mu}^2(x_1)\right) \, d\Haarof{X_1}(x_1)d\Haarof{X_2}(x_2) \\
        &= \int\int h\left(\frac{f_{\mu}(x_1,x_2)}{f_{\mu}^2(x_1)}\right) f_{\mu}^2(x_1) \, d\Haarof{X_1}(x_1)d\Haarof{X_2}(x_2) \\
        &+ \int\int -\log(f_{\mu}^2(x_1)) f_{\mu}(x_1,x_2) \, d\Haarof{X_1}(x_1)d\Haarof{X_2}(x_2) \\
        &\leq \int h(f_{\mu}^1(x_2)) \, d\Haarof{X_2}(x_2) + \int h(f_{\mu}^2(x_1)) \, d\Haarof{X_1}(x_1) \\
        &= D_{\mathrm{KL}}(\pi_1\mu \,||\, \Haarof{X_1}) + D_{\mathrm{KL}}(\pi_2\mu \,||\, \Haarof{X_2}),
    \end{align*} having used that $h$ is concave and Jensen's inequality in the penultimate line.
\end{proof}

\begin{lemma}\label{EntProductBound2}
    Let $(X, \Haarof{X})$ and $(Y, \Haarof{Y})$ be a locally compact Hausdorff space endowed with Radon measures, and let $\Phi: X \to Y$ be a homeomorphism with $\Phi_{*}\Haarof{X} = \Haarof{Y}$. Then for a measure $\nu \ll \Haarof{X}$ on $X$ it holds that $$D_{\mathrm{KL}}(\Phi_{*}\nu || m_Y) = D_{\mathrm{KL}}(\nu || m_X).$$ 
\end{lemma}

\begin{proof} Let $f:Y \to \R$ be a continuous compactly supported function. Then $$\int f \, d\Phi_{*}\nu =  \int (f\circ \Phi) \, d\nu = \int (f\circ \Phi) \frac{d\nu}{d\Haarof{X}} \, d\Haarof{X} $$ as well as 
\begin{align*}
    \int f \, d\Phi_{*}\nu &= \int f \, \frac{d\Phi_{*}\nu}{d\Haarof{Y}}  \, d\Haarof{Y} \\ &= \int f \, \frac{d\Phi_{*}\nu}{d\Haarof{Y}} \, d\Phi_*\Haarof{X} = \int (f\circ \Phi) \left(\frac{d\Phi_{*}\nu}{d\Haarof{Y}} \circ \Phi\right) \, d\Haarof{X}.
\end{align*}
    Since $\Phi$ is a homeomorphism, every continuous compactly supported function $g: X \to \R$ can be written as $g = f \circ \Phi$ for $f:Y \to \R$ a continuous compactly supported function. Therefore, using the Riesz Representation Theorem, it holds that $m_X$-almost surely $$\frac{d\Phi_{*}\nu}{dm_Y}\circ \Phi = \frac{d\nu}{dm_X}$$ and thus
    \begin{align*}
        D_{\mathrm{KL}}(\Phi_{*}\nu || m_Y) &= -\int \log \frac{d\Phi_{*}\nu}{dm_Y} \, d\Phi_{*}\nu \\
        &= -\int \log\left( \frac{d\Phi_{*}\nu}{dm_Y}\circ \Phi \right)  \, d\nu \\
        &= -\int \log\left(\frac{d\nu}{dm_X} \right) \, d\nu = D_{\mathrm{KL}}(\nu || m_X).
    \end{align*}
\end{proof}

\begin{lemma}\label{KLBound}
    Let $\lambda_1$ be a probability measure on $X$ and let $\lambda_2$ and $\lambda_3$ be measures on $X$ such that $\lambda_1 \ll \lambda_2$ and $\lambda_2 \ll \lambda_3$. Let $U \subset X$ and suppose that the support of $\lambda_1$ is contained in $U$. Then $$|D_{\mathrm{KL}}(\lambda_1 \,||\, \lambda_2) - D_{\mathrm{KL}}(\lambda_1 \,||\, \lambda_3)| \leq \sup_{x \in U} \bigg| \log \frac{d\lambda_2}{d\lambda_3}  \bigg|.$$
\end{lemma}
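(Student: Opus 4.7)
The plan is to exploit the chain rule for Radon–Nikodym derivatives and reduce the difference of the two divergences to a single integral against $\lambda_1$, which can then be bounded on the support of $\lambda_1$.

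First, since $\lambda_1 \ll \lambda_2 \ll \lambda_3$, I would invoke the chain rule to write
\[
\frac{d\lambda_1}{d\lambda_3} = \frac{d\lambda_1}{d\lambda_2} \cdot \frac{d\lambda_2}{d\lambda_3}
\]
$\lambda_1$-almost everywhere. Taking logarithms (which is legitimate $\lambda_1$-a.e. since both factors are positive on the support of $\lambda_1$) gives the additive identity $\log \tfrac{d\lambda_1}{d\lambda_3} = \log \tfrac{d\lambda_1}{d\lambda_2} + \log \tfrac{d\lambda_2}{d\lambda_3}$.

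Next, I would integrate this identity against $\lambda_1$ and apply the definition of the Kullback–Leibler divergence to obtain
\[
D_{\mathrm{KL}}(\lambda_1 \,||\, \lambda_2) - D_{\mathrm{KL}}(\lambda_1 \,||\, \lambda_3) = \int \log \frac{d\lambda_2}{d\lambda_3} \, d\lambda_1.
\]
Finally, since $\lambda_1$ is a probability measure supported in $U$, the right-hand side is bounded in absolute value by $\sup_{x \in U} |\log \tfrac{d\lambda_2}{d\lambda_3}(x)|$, which yields the claim.

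The argument is essentially a one-line manipulation once the chain rule is in place, so I do not anticipate any real obstacle. The only minor subtlety is a measurability/integrability check: one should verify that the integral $\int \log \tfrac{d\lambda_2}{d\lambda_3}\, d\lambda_1$ is well-defined, but this is immediate from the stated supremum bound (if the supremum is infinite the inequality is trivial, and otherwise $\log \tfrac{d\lambda_2}{d\lambda_3}$ is bounded on the support of $\lambda_1$).
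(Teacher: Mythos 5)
Your proposal is correct and follows essentially the same route as the paper: both proofs reduce the difference of divergences, via the chain rule $\frac{d\lambda_1}{d\lambda_3} = \frac{d\lambda_1}{d\lambda_2}\cdot\frac{d\lambda_2}{d\lambda_3}$ ($\lambda_1$-a.e.), to the integral of $\log\frac{d\lambda_2}{d\lambda_3}$ against $\lambda_1$, and then bound it by the supremum over $U$ since $\lambda_1$ is a probability measure supported there. The only cosmetic difference is that the paper applies the triangle inequality inside the integral before invoking the chain rule, whereas you compute the exact difference first; the content is identical.
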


\begin{proof}
    We calculate 
    \begin{align*}
        |D_{\mathrm{KL}}(\lambda_1 \,||\, \lambda_2) - D_{\mathrm{KL}}(\lambda_1 \,||\, \lambda_3)| &= \bigg|  \int_U \log \frac{d\lambda_1}{d\lambda_2} \, d\lambda_1 - \int_U \log \frac{d\lambda_1}{d\lambda_3} \, d\lambda_1  \bigg|  \\
        &\leq \int_U \bigg| \log \frac{d\lambda_1}{d\lambda_2} - \log \frac{d\lambda_1}{d\lambda_3} \bigg| \, d\lambda_1 \\
        &= \int_U \bigg|  \log \frac{d\lambda_2}{d\lambda_3} \bigg| \, d\lambda_1 \\
        &\leq \sup_{x \in U} \bigg| \log \frac{d\lambda_2}{d\lambda_3}  \bigg|.
    \end{align*}
\end{proof}

\subsection{Regular Conditional Distributions} \label{RegConSection}

In this section we review the definition of regular conditional distributions that will be used in the following subsections in order to discuss conditional entropy. On a probability space $(\Omega, \mathscr{F}, \mathbb{P})$, we denote the conditional expectation by $\E[f|\mathscr{A}]$ for $f \in L^1(\Omega, \mathscr{F}, \mathbb{P})$ and a $\sigma$-algebra $\mathscr{A} \subset \mathscr{F}$. Given two measurable spaces $(\Omega_1, \mathscr{A}_1)$ and $(\Omega_2, \mathscr{A}_2)$, recall that a Markov kernel on $(\Omega_1, \mathscr{A}_1)$ and $(\Omega_2, \mathscr{A}_2)$ is a map $\kappa : \Omega_1 \times \mathscr{A}_2 \to [0,1]$ if for any $A_2 \in \mathscr{A}_2$, the map $\kappa(\cdot, A_2)$ is $\mathscr{A}_1$-measurable and for any $\omega_1$ the map $A_2 \to \kappa(\omega_1, A_2)$ is a probability measure. 

\begin{definition}
    Let $(\Omega, \mathscr{F}, \mathbb{P})$ be a probability space and let $\mathscr{A} \subset \mathscr{F}$ be a $\sigma$-algebra. Let $(E, \xi)$ be a measurable space and let $Y : (\Omega, \mathscr{F}) \to (E, \xi)$ be a random variable. Then we say that a Markov kernel $$(Y|\mathscr{A}) : \Omega \times \xi \to [0,1]$$ on $(\Omega, \mathscr{A})$ and $(E, \xi)$ is a \textbf{regular conditional distribution} if for all $B \in \xi$, $$(Y|\mathscr{A})(\omega, B) = \mathbb{P}[Y \in B \,|\, \mathscr{A}](\omega) = \mathbb{E}[1_{Y^{-1}(B)}\,|\, \mathscr{A}](\omega).$$ In other words, $$\mathbb{E}[(Y|\mathscr{A})(\cdot, B)1_A] = \mathbb{P}[A \cap \{ Y \in B \}]$$ for all $A \in \mathscr{A}$. 
\end{definition}

Regular conditional distributions exists whenever $(\Omega, \mathscr{F}, \mathbb{P})$ is a standard probability space. To give a construction, recall (c.f. section 3 of \cite{EinsiedlerWardErgodicBook}) that there exist conditional measures $\mathbb{P}_{\omega}^{\mathscr{A}}$ uniquely characterized by $$\E[f|\mathscr{A}](\omega) = \int f \, d\mathbb{P}_{\omega}^\mathscr{A}.$$ Then $$(Y|\mathscr{A})(\omega, \cdot) = Y_{*}\mathbb{P}_{\omega}^\mathscr{A}$$ Indeed, for $B \in \xi$, $$(Y|\mathscr{A})(\omega, B) = E[1_{Y^{-1}(B)}|\mathscr{A}](\omega) = \int 1_{Y^{-1}(B)} \, d\mathbb{P}_{\omega}^\mathscr{A} = \mathbb{P}_{\omega}^\mathscr{A}(Y^{-1}(B)) = Y_{*}\mathbb{P}_{\omega}^\mathscr{A}(B).$$ We denote by $[Y|\mathscr{A}]$ a random variable defined on a separate probability space with law $(Y|\mathscr{A})$. 

We recall that given two further $\sigma$-algebras $\mathscr{G}_1, \mathscr{G}_2 \subset \mathscr{F}$, we say that they are independent given $\mathscr{A}$ if for all $U \in \mathscr{G}_1$ and $V\in \mathscr{G}_2$ $$\mathbb{P}[U \cap V|\mathscr{A}] = \mathbb{P}[U|\mathscr{A}]\mathbb{P}[V|\mathscr{A}]$$ almost surely. Similarly, two random variables $Y_1$ and $Y_2$ are independent given $\mathscr{A}$ if the $\sigma$-algebra they generate are. Note that if $Y_1$ is $\mathscr{A}$-measurable, then it is independent given $\mathscr{A}$ to every random variable $Y_2$. 

Given a topological group $G$ and two measures $\mu_1$ and $\mu_2$ we recall that the convolution $\mu_1 * \mu_2$ is defined as $$(\mu_1*\mu_2)(B) = \int\int 1_B(gh) \, d\mu_1(g)d\mu_2(h)$$ for any measurable set $B\subset G$.

\begin{lemma}\label{ConditionalConvolution}
    Let $(\Omega, \mathscr{F}, \mathbb{P})$ be a probability space, $G$ be a topological group and $g,h$ be $G$-valued random variables. Let $\mathscr{A} \subset \mathscr{F}$ be a $\sigma$-algebra and assume that $g$ and $h$ are independent given $\mathscr{A}$. Then the following properties hold:
    \begin{enumerate}[label = (\roman*)]
        \item $(gh|\mathscr{A}) = (g|\mathscr{A}) * (h|\mathscr{A})$ almost surely.
        \item $[gh|\mathscr{A}] = [g|\mathscr{A}]\cdot [h|\mathscr{A}]$ almost surely.
    \end{enumerate}
\end{lemma}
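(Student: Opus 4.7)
The plan is to derive both parts from the fact that conditional independence turns the joint conditional law into a product conditional law, after which convolution appears by pushing forward under the group multiplication map. I write $m : G \times G \to G$ for the multiplication $m(x,y) = xy$, which is measurable (even continuous).

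First I would show the intermediate claim that $((g,h)|\mathscr{A}) = (g|\mathscr{A}) \times (h|\mathscr{A})$ almost surely, where the right hand side denotes the product Markov kernel. For rectangles $U \times V$ with $U, V$ measurable in $G$, the conditional independence hypothesis together with the defining property of the regular conditional distribution gives
\begin{align*}
((g,h)|\mathscr{A})(\omega, U \times V) &= \mathbb{P}[g \in U,\, h \in V \mid \mathscr{A}](\omega) \\
&= \mathbb{P}[g \in U \mid \mathscr{A}](\omega) \cdot \mathbb{P}[h \in V \mid \mathscr{A}](\omega) \\
&= (g|\mathscr{A})(\omega, U) \cdot (h|\mathscr{A})(\omega, V)
\end{align*}
for almost every $\omega$. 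Since rectangles form a $\pi$-system generating the product $\sigma$-algebra on $G \times G$, a standard Dynkin/monotone class argument extends the identity to all measurable subsets, and a.s.\ uniqueness of the regular conditional distribution (applied on a standard probability space, which we may assume so that the kernels exist) yields the claim.

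Next I would push forward under $m$. For any measurable $B \subset G$ and any $A \in \mathscr{A}$, the definition of the regular conditional distribution of $gh$ gives
\begin{align*}
\mathbb{E}[(gh|\mathscr{A})(\cdot, B)\, 1_A] &= \mathbb{P}[A \cap \{gh \in B\}] \\
&= \mathbb{P}[A \cap \{(g,h) \in m^{-1}(B)\}] \\
&= \mathbb{E}[((g,h)|\mathscr{A})(\cdot, m^{-1}(B))\, 1_A],
\end{align*}
so $(gh|\mathscr{A})(\omega, \cdot) = m_* ((g,h)|\mathscr{A})(\omega, \cdot)$ a.s. Combining with the previous step and recalling the definition of convolution of two measures on a topological group, we obtain
\[
(gh|\mathscr{A})(\omega, B) = \int\!\!\int 1_B(xy)\, d(g|\mathscr{A})(\omega, x)\, d(h|\mathscr{A})(\omega, y) = \bigl((g|\mathscr{A}) * (h|\mathscr{A})\bigr)(\omega, B)
\]
almost surely, which is (i).

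Part (ii) follows immediately by transferring (i) to the representing random variables $[g|\mathscr{A}]$ and $[h|\mathscr{A}]$, which by construction live on a separate probability space where they are independent and have the prescribed conditional laws; their product then has law $(g|\mathscr{A})*(h|\mathscr{A}) = (gh|\mathscr{A})$ a.s., matching the law of $[gh|\mathscr{A}]$. The main technical point, and the only place one has to be careful, is the almost-sure identification of the joint conditional kernel with the product of the marginal conditional kernels: this is where conditional independence is used and where one invokes the monotone class theorem to pass from rectangles to general measurable sets, along with the existence of regular conditional distributions (which needs a mild regularity assumption on the underlying probability space).
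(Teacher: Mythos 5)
Your proposal is correct and follows essentially the same route as the paper: the paper phrases the argument through the conditional measures $\mathbb{P}_{\omega}^{\mathscr{A}}$, under which $g$ and $h$ become independent for almost every $\omega$, and then identifies the law of $gh$ under $\mathbb{P}_{\omega}^{\mathscr{A}}$ with the convolution, which is exactly the content of your product-kernel factorization of $((g,h)|\mathscr{A})$ followed by pushforward under multiplication. Your version just makes the product step explicit via the $\pi$-system/monotone class argument (where, as you note, one needs a standard/countably generated setting to get a single null set), and part (ii) is handled the same way as in the paper, by transferring to the representing variables on a separate probability space.
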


\begin{proof}
    To show (i), we note that by assumption $g$ and $h$ are independent with respect to $\mathbb{P}_{\omega}^{\mathscr{A}}$ for almost all $\omega \in \Omega$. This implies that for $f: G \to \R$ a continuous compactly supported function, $$\E_{\mathbb{P}_{\omega}^{\mathscr{A}}}[f(gh)] = \E_{\mathbb{P}_{\omega}^{\mathscr{A}}}[\E_{\mathbb{P}_{\omega}^{\mathscr{A}}}[f(gh)|h]] = \E_{(z_1,z_2) \sim \mathbb{P}_{\omega}^{\mathscr{A}} \times \mathbb{P}_{\omega}^{\mathscr{A}}}[f(g(z_1)h(z_2))],$$ proving (i). (ii) follows from (i) on a suitable separate probability space. 
\end{proof}

The aim of this subsection is to prove an abstract result relating entropy between scales and the trace. To do so, we first discuss conditional entropy and conditional trace. Let $Y$ be a random variable on a probability space $(\Omega, \mathscr{F}, \mathbb{P})$ and $\mathscr{A} \subset \mathscr{F}$ be a $\sigma$-algebra. Denote by $(Y|\mathscr{A})$ the regular conditional distribution as defined in section~\ref{RegConSection}. Assuming that $(Y|\mathscr{A})$ is almost surely absolutely continuous, we define $$H((Y \,|\, \mathscr{A}))(\omega) = H((Y|\mathscr{A})(\omega)).$$ 

Recall that if $X_1$ and $X_2$ are two random variables then entropy of $X_1$ given $X_2$ is $H(X_1|X_2) = H(X_1, X_2) - H(X_2).$ If $X_1$ and $X_2$ have finite entropy and finite joint entropy, then by \cite{Vignaux2021}*{Proposition 3},
\begin{equation}\label{ConditionalVarianceExpectation}
    H(X_1|X_2) = \mathbb{E}[H((X_1|X_2))].
\end{equation}

\subsection{Conditional Entropy}  \label{FirstEntropy:Conditional}

We next give an abstract definition of the entropy at a scale for a smoothing functions $s$. Indeed, let $g$ and $s$ be random variables on $G$ and assume that $s$ is absolutely continuous. Then the entropy at scale $s$ is defined as $$H(g; s) = H(gs) - H(s)$$ Moreover, if $s_1$ and $s_2$ are absolutely continuous smoothing functions we define the entropy between scales $s_1$ and $s_2$ as $$H(g; s_1|s_2) = H(g; s_1) - H(g; s_2).$$ The following basic result on the growth of conditional entropy holds.

\begin{lemma}\label{BasicEntropyGrowth}
    Let $g, s_1, s_2$ be independent random variables taking values in $G$. Assume that $s_1$ and $s_2$ are absolutely continuous with finite differential entropy and assume that $gs_1$ and $gs_2$ also have finite differential entropy. Then $$H(gs_1|gs_2) \geq H(g; s_1|s_2) + H(s_1).$$
\end{lemma}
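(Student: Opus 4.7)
The plan is to unwind both sides into joint entropies and then reduce the inequality to the intuitive statement that conditioning by $gs_1$ retains at least as much information about $gs_2$ as the raw entropy of $s_2$.

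Expanding definitions, one has $H(g;s_1|s_2) + H(s_1) = H(gs_1) - H(gs_2) + H(s_2)$, while the chain rule gives $H(gs_1|gs_2) = H(gs_1,gs_2) - H(gs_2)$. Thus the claim is equivalent to
\begin{equation*}
    H(gs_1, gs_2) \geq H(gs_1) + H(s_2),
\end{equation*}
and a further application of the chain rule $H(gs_1, gs_2) = H(gs_1) + H(gs_2 \mid gs_1)$ reduces the claim to showing
\begin{equation*}
    H(gs_2 \mid gs_1) \geq H(s_2).
\end{equation*}

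To establish this, I would use the fact that conditioning on strictly more information cannot increase entropy: enlarging the conditioning $\sigma$-algebra from $\sigma(gs_1)$ to $\sigma(g, s_1)$ gives
\begin{equation*}
    H(gs_2 \mid gs_1) \;\geq\; H(gs_2 \mid g, s_1).
\end{equation*}
This monotonicity step is the main technical point and I would justify it rigorously via the regular conditional distributions set up in Section~\ref{RegConSection} together with \eqref{ConditionalVarianceExpectation}, by showing that for any random variables $X, Y, Z$ with finite joint differential entropies, $H(X \mid Y) = H(X, Y) - H(Y) \geq H(X, Y, Z) - H(Y, Z) = H(X \mid Y, Z)$ (this uses the nonnegativity of the mutual information / KL divergence, which follows for our absolutely continuous variables from Jensen's inequality applied to $h$).

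It remains to compute $H(gs_2 \mid g, s_1)$. Since $s_2$ is independent of the pair $(g, s_1)$, the regular conditional distribution $(gs_2 \mid g, s_1)$ evaluated at $(\gamma, \sigma)$ equals the pushforward of the law of $s_2$ under left multiplication by $\gamma$. Because the Haar measure $\Haarof{G}$ is left-invariant, Lemma~\ref{EntProductBound2} (or a direct change of variables) shows that this left translation preserves differential entropy, so $H((gs_2 \mid g, s_1))(\omega) = H(s_2)$ almost surely. Averaging via \eqref{ConditionalVarianceExpectation} yields $H(gs_2 \mid g, s_1) = H(s_2)$, completing the chain of inequalities.
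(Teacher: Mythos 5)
Your proposal is correct and follows essentially the same route as the paper: both reduce the claim via the chain rule to $H(gs_2\mid gs_1)\geq H(s_2)$, and establish this by the monotonicity $H(gs_2\mid gs_1)\geq H(gs_2\mid g,s_1)$ together with independence of $s_2$ and left-invariance of the Haar measure. Your write-up merely spells out in more detail the steps the paper states tersely.
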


\begin{proof}
    Note that $$H(gs_2|gs_1) \geq H(gs_2|g,s_1) = H(gs_2|g)  =  H(s_2),$$ having used in the inequality that conditioning reduces entropy as in \cite{CoverThomasBook}*{Section 8.6}, and so $$H(gs_2,gs_1) = H(gs_2|gs_1) + H(gs_1) \geq H(gs_1) + H(s_2).$$ Therefore 
    \begin{align*}
      H(gs_1|gs_2) &= H(gs_2,gs_1) - H(gs_2) \\ &\geq H(gs_1) - H(gs_2) + H(s_2) \\ &\geq H(g; s_1|s_2) + H(s_1). 
      \qedhere
      \end{align*}
\end{proof}

    \section{Entropy Growth for Stopped Random Walks}
    \label{EntropyGrowth}

The reader may recall the definition of $H_a(g;r)$ as given in \eqref{Hadef}. In this section we prove Theorem~\ref{StoppedRWHighEnt}, which we restate for convenience, and Corollary~\ref{CorStoppedRWHighEnt}. 

Given a probability measure $\mu$, we sample independent $\mu$-distributed random variables $\gamma_1, \gamma_2, \ldots $ that are independent from $(s_{a,r})_{a \geq 1, r > 0}$ and we denote $$q_n = \gamma_1\cdots \gamma_n.$$

\begin{theorem}\label{2ndStoppedRWHighEnt}(Theorem~\ref{StoppedRWHighEnt}) Let $\mu$ be a finitely supported probability measure on $G$. Let $\eta = (\eta_n)_{n \geq 1}$ be a sequence of stopping times satisfying the large deviation principle and write $L_n = \mathbb{E}[\eta_n]$ for $n\geq 1$. Let $a \geq 1$, $\eps > 0$ and let $r_n > 0$ be a sequence satisfying for all $n\geq 1$, $$r_n \leq a^{-1}c_G M_{\lceil (1 + \eps)L_n \rceil}$$ for a constant $c_G > 0$ depending only on $G$. Then for all $n \geq 1$,
	$$H_a(q_{\eta_{n}} ; r_n) \geq h_{\mu}\cdot L_n + o_{\mu,\eta,\eps}(L_n).$$
\end{theorem}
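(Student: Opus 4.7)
The plan is to decompose $q_{\eta_n}$ according to the value of $\eta_n$ and then compare with the unstopped product $q_{N'}$ for $N' = \lceil (1+\eps) L_n \rceil$, exploiting the strong Markov property of the stopping time. Writing $\mu_k = \mathbb{P}[\eta_n = k]$ and $\nu_k$ for the conditional law of $q_{\eta_n}$ given $\eta_n = k$, my aim is to first establish
$$H_a(q_{\eta_n}; r_n) \geq \sum_{k \leq N'} \mu_k H(\nu_k),$$
and then lower-bound the right-hand side by $h_\mu L_n + o_{\mu,\eta,\eps}(L_n)$ via a chain-rule comparison with $q_{N'}$.

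For the first step I would fix $c_G$ small enough that $r_n \leq a^{-1} c_G M_{N'}$ forces the support of $s_{a,r_n}$ to lie in a ball of radius at most $M_{N'}/2$ about the identity. Then any two distinct elements of $\bigcup_{j \leq N'} \mathrm{supp}(\mu^{*j})$ have separation at least $2 a r_n$, so Lemma~\ref{EntProd} applied to $\nu_k * s_{a,r_n}$ gives $H(\nu_k * s_{a,r_n}) = H(\nu_k) + H(s_{a,r_n})$ for every $k \leq N'$. For $k > N'$ one uses the trivial inequality $H(XZ) \geq H(Z)$, valid whenever $X$ and $Z$ are independent and $Z$ is absolutely continuous (a consequence of Haar invariance together with conditioning reducing entropy). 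Writing the law of $q_{\eta_n} s_{a,r_n}$ as $\sum_k \mu_k(\nu_k * s_{a,r_n})$ and invoking Lemma~\ref{EntSum1} then produces the claimed reduction.

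For the comparison, the strong Markov property gives that conditionally on $\eta_n = k \leq N'$ the factors $\gamma_{k+1}, \ldots, \gamma_{N'}$ are i.i.d.\ $\mu$ and independent of $q_k$, so $q_{N'}$ has conditional law $\nu_k * \mu^{*(N'-k)}$. Subadditivity yields $H(q_{N'} \mid \eta_n = k) \leq H(\nu_k) + H(\mu^{*(N'-k)})$ for $k \leq N'$, and the trivial bound $H(q_{N'} \mid \eta_n = k) \leq N' H(\mu)$ for $k > N'$. Combining with the chain rule $H(q_{N'}) \leq H(\eta_n) + H(q_{N'} \mid \eta_n)$ and the lower bound $H(q_{N'}) \geq N' h_\mu$ from \eqref{VeryBasicEntBound}, one obtains after rearrangement
$$\sum_{k \leq N'} \mu_k H(\nu_k) \geq (1+\eps) L_n h_\mu - H(\eta_n) - \sum_{k \leq N'} \mu_k H(\mu^{*(N'-k)}) - O_\mu(L_n) \, \mathbb{P}[\eta_n > N'].$$
The entropy $H(\eta_n)$ is $O(\log L_n) = o_\eta(L_n)$ by the maximum-entropy bound for integer random variables with mean $L_n$, and the LDP gives $\mathbb{P}[\eta_n > N'] \leq e^{-\delta L_n}$ for some $\delta = \delta(\eps) > 0$, killing the last term.

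The hardest piece is showing $\sum_{k \leq N'} \mu_k H(\mu^{*(N'-k)}) \leq \eps L_n h_\mu + o_{\mu,\eta,\eps}(L_n)$: the crude bound $H(\mu^{*m}) \leq m H(\mu)$ would replace $h_\mu$ by $H(\mu)$ and destroy the conclusion whenever $h_\mu < H(\mu)$. I would handle this by picking $\eps'_n \to 0$ slowly via a diagonal application of the LDP so that $\mathbb{P}[|\eta_n - L_n| > \eps'_n L_n] = o(L_n^{-1})$, then splitting the sum: for typical $k \in [(1-\eps'_n)L_n, (1+\eps'_n)L_n]$ the exponent $m = N' - k = \Theta(L_n)$ tends to infinity, so $h_\mu = \lim H(\mu^{*m})/m$ gives $H(\mu^{*m}) = m h_\mu + o(L_n)$ and the typical contribution is at most $(\eps + \eps'_n) L_n h_\mu + o(L_n)$; the atypical contribution is bounded crudely by $\mathbb{P}[\text{atypical}] \cdot N' H(\mu) = o_\mu(1)$. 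This refined estimate is where the LDP hypothesis is used beyond merely providing a single tail bound at $N'$.
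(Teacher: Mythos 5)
Your argument is correct in substance but takes a genuinely different route from the paper's. The opening step is shared: you split the law of $q_{\eta_n}s_{a,r_n}$ according to the stopping time, use the superadditivity of Lemma~\ref{EntSum1}, the exact splitting of Lemma~\ref{EntProd} on the event $\{\eta_n \leq N'\}$ with $N' = \lceil (1+\eps)L_n\rceil$ (where the separation hypothesis $r_n \leq a^{-1}c_G M_{N'}$ enters), and the trivial bound $H(Xs)\geq H(s)$ on the tail; the paper does the same after truncating $\eta_n$ to $\eta_n' \in [\lfloor(1-\eps)L_n\rfloor,\lceil(1+\eps)L_n\rceil]$ rather than conditioning on each value $k$. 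The divergence is in the lower bound for the conditioned entropies: the paper introduces the auxiliary variable $X' = (q_{\lfloor(1-\eps)L_n\rfloor},\gamma_{\lfloor(1-\eps)L_n\rfloor+1},\ldots,\gamma_{N'})$, bounds $H(X'\,|\,q_{\eta_n'})$ by a counting argument and Lemma~\ref{EntSum2}, and obtains $H(q_{\eta_n'}) \geq (h_\mu - 2\eps\log K)L_n - o(L_n)$; the resulting $O(\eps L_n)$ loss is then removed by the initial reduction to arbitrarily small $\eps$, which is legitimate because $M_n$ is non-increasing. You instead use the strong Markov property to identify the conditional law of $q_{N'}$ given $\eta_n=k$ as $\nu_k * \mu^{*(N'-k)}$, apply the chain rule with $H(\eta_n) = O(\log L_n)$, and, as the key refinement, bound $\sum_{k\leq N'}\mu_k H(\mu^{*(N'-k)})$ by $\eps h_\mu L_n + o(L_n)$ using the convergence $H(\mu^{*m})/m \to h_\mu$ uniformly over a slowly shrinking LDP window; this yields the stated bound directly for the given $\eps$ with a genuine $o(L_n)$ error, so you never need the small-$\eps$ reduction, while the paper's route stays entirely within its stated entropy lemmas and avoids invoking the conditional structure of the stopped walk. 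Two points to tidy up, neither fatal: for $k > N'$ the bound $H(q_{N'}\mid \eta_n = k)\leq N'H(\mu)$ is not justified, since conditioning on an event can increase entropy, but the support bound $H(q_{N'}\mid\eta_n=k)\leq N'\log K$ holds and is all that your $O_\mu(L_n)\,\Prob[\eta_n>N']$ term requires; and your appeal to the strong Markov property tacitly assumes the $\eta_n$ are stopping times for a filtration with respect to which the future increments $\gamma_{k+1},\gamma_{k+2},\ldots$ are independent of the past, which is the natural reading of the paper's setup but should be stated. (Also, $\Prob[\text{atypical}] = o(1)$ already suffices in your last step, since the atypical terms only carry an $O_\mu(L_n)$ factor, so the diagonal choice of $\eps_n'$ need not achieve $o(L_n^{-1})$.)
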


Recall that $H_a(q_{\eta_n}; r_n) = H(q_{\eta_n}s_{a, r_n}) - H(s_{a, r_n})$. To give the proof idea, note that by assuming a large deviation principle, with high probability $\eta_n \approx \E[\eta_n]$. Also, by definition of $h_{\mu}$, we have that $H(q_{L_n}) \geq h_{\mu} \cdot L_n$. On the other hand,  $s_{a, r_n}$ is mostly contained in a ball around the identity with radius $O(M_{L_n})$, and therefore by Lemma~\ref{EntProd} we have $H(q_{L_n}\cdot s_{a, r_n}) = H(q_{L_n}) + H(s_{a, r_n}),$ which implies the claim. We proceed with a more rigorous proof.

\begin{proof}
    We note that if the assumption holds for some $\eps$, it holds for all sufficiently small $\eps$. Therefore it suffices to show that for all sufficiently small fixed $\eps > 0$ we have that $$H_a(q_{\eta_{n}} ; r_n) \geq h_{\mu}\cdot L_n + O_{\mu,\eta}(\eps L_n) + o_{\mu,\eta,\eps}(L_n).$$ So fix some $\eps > 0$ which is sufficiently small in terms of  $\mu$ and consider $\eta_n'$ as $$\eta_n' = \begin{cases}
        \lceil (1 + \eps) L_n \rceil & \text{if } \eta_n > \lceil (1 + \eps) L_n \rceil, \\
        \lfloor (1 - \eps) L_n \rfloor & \text{if } \eta_n < \lfloor (1 - \eps) L_n \rfloor, \\
        \eta_n & \text{otherwise}.
    \end{cases}$$
    For a random variable $X$ denote by $\mathcal{L}(X)$ its law. Furthermore, given an event $A$, we will denote by $\mathcal{L}(X)|_A$ the measure given by the push forward of the restriction of $\mathbb{P}$ to $A$ under the random variable $X$. Note that $\|\mathcal{L}(X)|_A\,\|_1 = \mathbb{P}[A]$.

    By applying Lemma~\ref{EntSum1}, 
    \begin{align}
        H(q_{\eta_n}s_{a, r_n}) &= H(\mathcal{L}(q_{\eta_n}) * \mathcal{L}(s_{a, r_n})) \nonumber \\
        &\geq H(\mathcal{L}(q_{\eta_n})|_{\eta_n = \eta_n'} * \mathcal{L}(s_{a, r_n}))  + H(\mathcal{L}(q_{\eta_n})|_{\eta_n \neq \eta_n'} * \mathcal{L}(s_{a, r_n})) \nonumber \\
        &\geq H(\mathcal{L}(q_{\eta_n})|_{\eta_n = \eta_n'} * \mathcal{L}(s_{a, r_n})) + \mathbb{P}[\eta_n \neq \eta_n']H(\mathcal{L}(s_{a, r_n})), \label{EntScaleGrowth1FirstBound}
    \end{align} having used that  
    \begin{align*}
        H(\mathcal{L}(q_{\eta_n})|_{\eta_n \neq \eta_n'} * \mathcal{L}(s_{a, r_n}))  &\geq \mathbb{P}[\eta_n \neq \eta_n']H(\mathcal{L}(s_{a, r_n})),
    \end{align*} which can be shown by conditioning on $q_n$ (as conditioning reduces entropy \cite{CoverThomasBook}*{Section 8.6}) and using that $q_n$ and $s_{a,r_n}$ are independent.

    We next apply that $s_{a, r_n}$ has small support. Indeed, as $d(s_{a, r_n}, e) \ll_G r_na = o(M_{\lceil L_n(1 + \eps) \rceil})$ it follows that if $n$ is sufficiently large, $$d(s_{a, r_n}, \mathrm{Id}) <  \frac{1}{2}M_{\lceil L_n(1 + \eps) \rceil} \leq  \frac{1}{2}  \min_{x,y \in \mathrm{supp}(q_{\eta_n'}), x \neq y} d(x,y).$$ In particular, by Lemma~\ref{EntProd}, 
    \begin{equation}\label{EntScaleGrowth1SepEq}
        H(\mathcal{L}(q_{\eta_n})|_{\eta_n = \eta_n'} * \mathcal{L}(s_{a, r_n})) = H(\mathcal{L}(q_{\eta_n})|_{\eta_n = \eta_n'}) + \Prob[\eta_n = \eta_n'] H(\mathcal{L}(s_{a, r_n})).
    \end{equation}
    Combining \eqref{EntScaleGrowth1SepEq} with \eqref{EntScaleGrowth1FirstBound},  
    $$H(q_{\eta_n}s_{a, r_n}) \geq H(\mathcal{L}(q_{\eta_n})|_{\eta_n = \eta_n'}) + H(s_{a, r_n}).$$

    It remains to estimate $H(\mathcal{L}(q_{\eta_n})|_{\eta_n = \eta_n'})$. Consider the random variable $$X' = (q_{\lfloor (1-\eps) L_n \rfloor}, \gamma_{\lfloor (1-\eps) L_n \rfloor + 1}, \gamma_{\lfloor (1-\eps) L_n \rfloor + 2}, \ldots , \gamma_{\lceil (1+\eps) L_n \rceil}).$$ As $q_{\eta_n'}$ is completely determined by $X'$, we have $H(X'|q_{\eta_n'}) = H(X') - H(q_{\eta_n'}).$

    Let $K$ be the number of points in the support of $\mu$. Note that if $$\gamma_{\lfloor (1-\eps) L_n \rfloor + 1}, \gamma_{\lfloor (1-\eps) L_n \rfloor + 2}, \ldots , \gamma_{\lceil (1+\eps) L_n \rceil }$$ and $\eta_n'$ are fixed, then for any possible value of $q_{\eta_n'}$ there is at most one choice of $q_{\lfloor (1-\eps)L_n \rfloor}$ which would lead to this value of $q_{\eta_n'}$. Therefore for each $y$ in the image of $q_{\eta_n'}$ there are at most $(2\eps L_n + 2)K^{2\eps L_n + 2}$ elements $x$ in the image of $X'$ such that $\mathbb{P}[X' = x \cap q_{\eta_n'} = y] > 0$. Therefore $(X'|q_{\eta_n'})$ is almost surely supported on less than $(2\eps L_n + 2)K^{2\eps L_n + 2}$ points and hence by \eqref{ConditionalVarianceExpectation}, $$H(X'|q_{\eta_n'}) \leq \log\left((2\eps L_n + 2)K^{2\eps L_n + 2} \right) \leq  2\eps \log K \cdot L_n  + o_{\mu,\eps}( L_n). $$
    On the other hand, 
    \begin{equation}\label{HXprimebound}
        H(X') \geq H(q_{\lfloor L_n \rfloor}) \geq h_{\mu} \cdot \lfloor L_n \rfloor
    \end{equation}
    and therefore $$H(q_{\eta_n'}) \geq (h_{\mu} - 2\eps \log K) L_n - o_{\mu,\eps}(L_n). $$

    To continue, we note that by Lemma~\ref{EntSum2}, 
    \begin{equation}\label{SecondBound}
        H(q_{\eta_n'}) \leq H(\mathcal{L}(q_{\eta_n'})|_{\eta_n = \eta_n'}) + H(\mathcal{L}(q_{\eta_n'})|_{\eta_n \neq \eta_n'}) + \log 2.
    \end{equation} We wish to bound $H(\mathcal{L}(q_{\eta_n'})|_{\eta_n = \eta_n'})$ from below.  By the large deviation principle, $\Prob[\eta_n \neq \eta_n'] \leq \alpha^{L_n}$ for $\alpha \in (0,1)$ only depending on $\eps$ and $\mu$. We also know that conditional on $\eta_n \neq \eta_n'$, there are at most $2K^{\lceil (1 + \eps)m \rceil}$ possible values for $q_{\eta_n'}$ and therefore $$H(\mathcal{L}(q_{\eta_n'})|_{\eta_n \neq \eta_n'}) \leq \alpha^{L_n} \log\left( 2K^{\lceil (1 + \eps)L_n \rceil} \right) = o_{\mu,\eps}(L_n).  $$ This implies $$H(\mathcal{L}(q_{\eta_n'})|_{\eta_n = \eta_n'}) \geq (h_{\mu} - 2\eps \log K) L_n - o_{\mu, \eta, \eps}(L_n),$$ concluding the proof.
\end{proof}

\begin{proof}(of Corollary~\ref{CorStoppedRWHighEnt})
    For sufficiently small $\eps > 0$ it holds that as $L_n \to \infty$ that $$e^{-S\cdot L_n} = o(e^{-S_{\lceil (1 + \eps)L_n \rceil} \cdot \lceil (1 + \eps)L_n \rceil}) = o(M_{\lceil (1 + \eps)L_n \rceil})$$ as $S > (1 + \eps) S_{m}$ for all sufficiently large $m$. The claim follows from Theorem~\ref{2ndStoppedRWHighEnt}. 
\end{proof}

    \section{Entropy and Trace on Lie groups}   \label{EntropyTraceSection}

In this section we prove \eqref{IntroGVarIneq} in section~\ref{Entropy:EntTraceComp}  and in section~\ref{Entropy:BetweenScales} we establish Theorem~\ref{MainEntropyIncrease}. In section~\ref{Entropy:Conditional} an auxiliary result necessary for the proof of Theorem~\ref{MainEntropyIncrease}  will be shown.

\subsection{Entropy and Trace} \label{Entropy:EntTraceComp}

In this subsection we prove \eqref{IntroGVarIneq}. Recall that given $g_0 \in G$ and a random variable $g$ on $G$ we define $$\mathrm{tr}_{g_0}(g) = \mathrm{tr}(\log(g_0^{-1}g)),$$ whenever $\log(g_0^{-1}g)$ is defined.

\begin{proposition}\label{GVarianceEntIneq}
    Let $G$ be a Lie group of dimension $\ell$. Let $\eps > 0$ and suppose that $g$ is a absolutely continuous random variable taking values in  $B_{\eps}(g_0)$ for some $g_0 \in G$. If $\eps$ is sufficiently small depending on $G$, $$H(g) \leq \frac{\ell}{2} \log\left(  \frac{2\pi e}{\ell} \cdot \tr_{g_0}(g) \right) + O_G(\eps).$$
\end{proposition}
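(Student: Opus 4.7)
The plan is to pull the problem back from $G$ to $\mathfrak{g} \cong \R^\ell$ via the logarithm and reduce to the classical Euclidean maximum-entropy inequality. Set $X = \log(g_0^{-1} g)$, which is an absolutely continuous $\mathfrak{g}$-valued random variable supported in $\log B_\eps$. By definition $\tr_{g_0}(g) = \tr(X)$, where the right-hand side denotes the trace of the covariance matrix of $X$ regarded as a random vector in $\R^\ell$ via the fixed inner product on $\mathfrak{g}$.

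First I would relate $H(g)$ and $H(X)$ by a direct change of variables. Let $\Phi : \log B_\eps \to B_\eps(g_0)$, $x \mapsto g_0 \exp(x)$, and let $J(x)$ denote its Jacobian relating $\Haarof{\mathfrak{g}}$ and $\Haarof{G}$. The normalisation of $\Haarof{G}$ stated in the introduction, together with the fact that $d\exp$ is the identity at the origin and smooth in $x$, ensures that $J(0) = 1$ and $J$ is smooth, hence $|\log J(x)| = O_G(|x|) = O_G(\eps)$ uniformly on $\log B_\eps$. Writing $f_g$ for the density of $g$ with respect to $\Haarof{G}$, the density of $X$ with respect to $\Haarof{\mathfrak{g}}$ is $f_X(x) = f_g(g_0 \exp x) J(x)$, so
$$H(g) = -\int f_X(x) \log\bigl(f_X(x)/J(x)\bigr) \, d\Haarof{\mathfrak{g}}(x) = H(X) + \int f_X(x) \log J(x) \, d\Haarof{\mathfrak{g}}(x) = H(X) + O_G(\eps).$$

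Second I would apply the standard maximum-entropy inequality on $\R^\ell$: for any absolutely continuous random variable with covariance matrix $\Sigma$, one has $H \leq \tfrac{1}{2} \log((2\pi e)^\ell \det \Sigma)$, with equality exactly for Gaussians. Since $\Sigma$ is positive semidefinite, AM-GM on its eigenvalues gives $\det \Sigma \leq (\tr \Sigma / \ell)^\ell$, and hence
$$H(X) \leq \tfrac{\ell}{2} \log\!\left( \tfrac{2\pi e}{\ell}\, \tr_{g_0}(g) \right).$$
Combining with the $O_G(\eps)$ defect from the first step gives the claimed inequality. The only genuinely non-trivial ingredient is the bound $|\log J(x)| = O_G(|x|)$, which is standard: in exponential coordinates the left-invariant Haar density equals $\det\!\bigl(\tfrac{1 - e^{-\mathrm{ad}\, x}}{\mathrm{ad}\, x}\bigr)$, a smooth function taking value $1$ at the origin, so a first-order Taylor expansion suffices. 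The remaining ingredients (the Gaussian entropy bound and AM-GM) are one-line inputs.
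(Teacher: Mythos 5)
Your proof is correct and follows essentially the same route as the paper: reduce to exponential coordinates, observe that the Jacobian between $\Haarof{G}$ and $\Haarof{\mathfrak{g}}\circ\log$ is $1+O_G(\eps)$ near the identity (the paper packages this via Lemma~\ref{KLBound} on Kullback--Leibler divergences rather than your explicit change of variables, but it is the same estimate), and then apply the Gaussian maximum-entropy inequality together with AM--GM to pass from $\det\Sigma$ to $\tr_{g_0}(g)$. No gaps.
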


\begin{proof}
    We first note that if $X$ is an $\ell$-dimensional absolutely continuous random vector, then 
    \begin{equation}\label{lVarEntIneq}
        H(X) \leq \frac{\ell}{2}\log\left(\frac{2\pi e}{\ell}\cdot \mathrm{tr}(X) \right)
    \end{equation}    Indeed, it follows from the 1-dimensional case \eqref{1VarEntInequality} that $H(X) \leq \frac{1}{2}\log((2\pi e)^{\ell} \cdot |\mathrm{Var}(X)|),$ where $|\mathrm{Var}(X)|$ is the determinant of the covariance matrix. Note that by the AM-GM inequality $|\mathrm{Var}(X)| \leq \tr(X)^{\ell}\ell^{-\ell}$, which implies \eqref{lVarEntIneq}. 

    Since $H(g_0^{-1}g) = H(g)$ and $\tr_{g_0}(g) = \tr_e(g_0^{-1}g)$, we may assume without loss of generality that $g_0 = e$. The density $\frac{d\Haarof{G}|_{B_{\eps}(e)}}{d(\Haarof{\mathfrak{g}} \circ \log)|_{B_{\eps}(e)}}$ is smooth and for $\varepsilon > 0$ sufficiently small is $1 + O_G(\eps)$ and therefore $\sup \big|\log \frac{d\Haarof{G}|_{B_{\eps}(e)}}{d(\Haarof{\mathfrak{g}} \circ \log)|_{B_{\eps}(e)}}\big|  \ll_G \eps.$ Thus by Lemma~\ref{KLBound}, 
    \begin{align*}
        |D_{\mathrm{KL}}(g\,||\, \Haarof{G}) - D_{\mathrm{KL}}(g\,||\, \Haarof{\mathfrak{g}} \circ \log)| \ll_G \eps.
    \end{align*}
    The claim follows since by \eqref{lVarEntIneq} 
    \begin{align*}
        D_{\mathrm{KL}}(g\,||\, \Haarof{\mathfrak{g}} \circ \log) = D_{\mathrm{KL}}(\log(g) \,||\, m_{\mathfrak{g}}) = H(\log(g)) \leq \frac{\ell}{2} \log\left(  \frac{2\pi e}{\ell} \tr_{e}(g) \right).
    \end{align*}
\end{proof}

\subsection{Conditional Trace and Entropy between Scales}  \label{Entropy:Conditional}

We next define the conditional trace of a random variable on $G$ and relate it to the entropy between scales. 

\begin{definition}
    Let $g$ be a random variable defined on a probability space $(\Omega, \mathscr{F}, \mathbb{P})$ and taking values in $G$. Let $\mathscr{A} \subset \mathscr{F}$ be a $\sigma$-algebra let $g_0$ be a $\mathscr{A}$-measurable random variable taking values on $G$. Then we denote by $\tr_{g_0}(g\,|\, \mathscr{A})$ the $\mathscr{A}$-measurable function given for $\omega \in \Omega$ by $$\tr_{g_0}(g\,|\, \mathscr{A})(\omega) = \tr_{g_0(\omega)}((g\,|\, \mathscr{A})(\omega)),$$ whenever this expression is well-defined.
\end{definition}

We note here that the variance of a measure $\mu$ is defined as the variance of a random variable with law $\mu$. It follows from Proposition~\ref{GVarianceEntIneq} that when $(g|\mathscr{A})$ is almost surely absolutely continuous, 

\begin{equation}\label{ConditionalGVarianceEntIneq}
    H((g|\mathscr{A})) \leq \frac{\ell}{2} \log\left(  \frac{2\pi e}{\ell} \cdot \tr_{g_0}(g|\mathscr{A}) \right) + O_G(\eps).
\end{equation}

\begin{proposition}\label{AbstractEntropyIncrease}
    Let $g,s_1$ and $s_2$ be independent absolutely continuous random variables taking values in $G$ and suppose that that $s_1$ and $s_2$ are supported on $B_{\eps}$ for some sufficiently small $\eps > 0$ and have finite differential entropy. Write $c = \frac{\ell}{2} \log \frac{2\pi e}{\ell} \tr_e(s_1) - H(s_1)$ and suppose that $\tr_e(s_1) \geq A\eps^2$ for some constant $A > 0$. Then $$\mathbb{E}[\tr_{gs_2}(g|gs_2)] \geq \frac{2}{\ell}(H(g; s_1|s_2) - c - O_G(A^{-1}\eps))\tr_e(s_1).$$ 
\end{proposition}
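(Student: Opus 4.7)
The plan is to apply the conditional entropy-trace inequality \eqref{ConditionalGVarianceEntIneq} to $gs_1$ given the $\sigma$-algebra $\mathscr{A} = \sigma(gs_2)$, centered at the $\mathscr{A}$-measurable random variable $gs_2$, and then combine with Lemma~\ref{BasicEntropyGrowth} to convert the resulting entropy lower bound into a trace lower bound. Since $(gs_2)^{-1}gs_1 = s_2^{-1}s_1$ lies in a ball of radius $O_G(\eps)$ (as both $s_1,s_2$ do), \eqref{ConditionalGVarianceEntIneq} applies with error $O_G(\eps)$, and taking expectation using \eqref{ConditionalVarianceExpectation} together with Jensen's inequality for the concave function $\log$ yields
\begin{equation*}
H(gs_1 \mid gs_2) \leq \frac{\ell}{2} \log\left(\frac{2\pi e}{\ell}\, \mathbb{E}[\tr_{gs_2}(gs_1 \mid \mathscr{A})]\right) + O_G(\eps).
\end{equation*}

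Next I would relate $\mathbb{E}[\tr_{gs_2}(gs_1\mid\mathscr{A})]$ to $\mathbb{E}[\tr_{gs_2}(g\mid\mathscr{A})]$ and $\tr_e(s_1)$ via a BCH expansion. Since $s_1$ is independent of $(g,s_2)$, it is conditionally independent of $s_2^{-1}$ given $\mathscr{A}$, so writing $U = \log((gs_2)^{-1}g) = \log s_2^{-1}$ and $V = \log s_1$, both of magnitude $O(\eps)$, the conditional variable $\log((gs_2)^{-1}gs_1) = \log(e^U e^V) = U + V + R$ with $|R| = O(\eps^2)$. Expanding the covariance matrix, the cross-term $\mathbb{E}[\langle U - \mathbb{E}U, V - \mathbb{E}V\rangle \mid \mathscr{A}]$ vanishes by conditional independence, while the contributions of $R$ to the trace are $O(\eps^3)$. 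Hence
\begin{equation*}
\mathbb{E}[\tr_{gs_2}(gs_1\mid\mathscr{A})] = \mathbb{E}[\tr_{gs_2}(g\mid\mathscr{A})] + \tr_e(s_1) + O(\eps^3).
\end{equation*}

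Now I would combine these estimates with Lemma~\ref{BasicEntropyGrowth}, which gives $H(gs_1\mid gs_2) \geq H(g;s_1\mid s_2) + H(s_1)$, and the definition $H(s_1) = \frac{\ell}{2}\log(\frac{2\pi e}{\ell} \tr_e(s_1)) - c$. Rearranging the resulting inequality yields
\begin{equation*}
\frac{2}{\ell}\bigl(H(g;s_1\mid s_2) - c - O_G(\eps)\bigr) \leq \log\left(1 + \frac{\mathbb{E}[\tr_{gs_2}(g\mid\mathscr{A})] + O(\eps^3)}{\tr_e(s_1)}\right),
\end{equation*}
and applying $\log(1+u)\leq u$ then rearranging produces
\begin{equation*}
\mathbb{E}[\tr_{gs_2}(g\mid gs_2)] \geq \frac{2}{\ell}\bigl(H(g;s_1\mid s_2) - c - O_G(\eps)\bigr)\tr_e(s_1) - O(\eps^3).
\end{equation*}
Finally, the assumption $\tr_e(s_1)\geq A\eps^2$ gives $O(\eps^3) = O(A^{-1}\eps)\tr_e(s_1)$, absorbing both error terms into the claimed $O_G(A^{-1}\eps)$.

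The main obstacle is the trace-additivity step: one must justify that the BCH remainder $R$ contributes only $O(\eps^3)$ to the trace despite the fact that $R$ depends on both $U$ and $V$ (so it is not independent of either). This is handled by expanding $\tr(U+V+R) = \tr(U) + \tr(V) + \tr(R) + 2\mathbb{E}[\langle U - \mathbb{E}U, V - \mathbb{E}V\rangle] + 2\mathbb{E}[\langle U - \mathbb{E}U, R - \mathbb{E}R\rangle] + 2\mathbb{E}[\langle V - \mathbb{E}V, R - \mathbb{E}R\rangle]$ and bounding the last two cross-terms by Cauchy--Schwarz against $|U|,|V|=O(\eps)$ and $|R|=O(\eps^2)$, which yields the required $O(\eps^3)$ control; the first cross-term vanishes by conditional independence of $U$ and $V$ given $\mathscr{A}$.
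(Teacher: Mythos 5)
Your argument is correct and follows essentially the same route as the paper: Lemma~\ref{BasicEntropyGrowth} plus \eqref{ConditionalVarianceExpectation}, the conditional entropy--trace bound \eqref{ConditionalGVarianceEntIneq} centred at $gs_2$, trace additivity for the independent factor $s_1$ via a BCH expansion (the paper's Lemma~\ref{GVarProdBound}), and then $\log(1+x)\leq x$ together with $\tr_e(s_1)\geq A\eps^2$ to absorb the error terms. The only inessential difference is that you move the expectation inside the logarithm by Jensen before linearising, whereas the paper applies $\log(1+x)\leq x$ pointwise inside the expectation; both yield the same bound.
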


We first prove some basic result on the trace of the product of two random variables. 

\begin{lemma}\label{GVarProdBound}
    Let $\eps > 0$ be sufficiently small and let $a,b$ be random variables and $\mathscr{A}$ a $\sigma$-algebra. Suppose that $b$ is independent from $a$ and $\mathscr{A}$ and let $g_0$ be an $\mathscr{A}$-measurable random variable. Suppose that $g_0^{-1}a$ and $b$ are almost surely contained in $B_{\eps}$. Then $$\tr_{g_0}(ab|\mathscr{A}) = \tr_{g_0}(a|\mathscr{A}) + \tr_{e}(b) + O_G(\eps^3).$$
\end{lemma}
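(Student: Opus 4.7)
The plan is to reduce everything to the Lie algebra via the Baker--Campbell--Hausdorff (BCH) formula and then exploit the conditional independence of $\log(g_0^{-1}a)$ and $\log b$ given $\mathscr{A}$.

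First I would set $X = \log(g_0^{-1}a)$ and $Y = \log b$, which by hypothesis satisfy $|X|, |Y| \leq \eps$ almost surely. Since $g_0$ is $\mathscr{A}$-measurable, $g_0^{-1}ab = \exp(X)\exp(Y)$, so by BCH, once $\eps$ is sufficiently small in terms of $G$,
\begin{equation*}
    Z := \log(g_0^{-1}ab) = X + Y + \tfrac{1}{2}[X,Y] + R,
\end{equation*}
where $|R| \leq C_G \eps^3$ for a constant depending only on the Lie algebra structure constants. Because $b$ is independent of $(a,\mathscr{A})$, the random variable $Y$ is independent of $\mathscr{A}$ and of $X$; in particular, $X$ and $Y$ are conditionally independent given $\mathscr{A}$.

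Next I would compute the conditional mean and conditional variance. Writing $\mu_X = \E[X|\mathscr{A}]$, $\mu_Y = \E[Y]$, bilinearity of the bracket together with conditional independence gives $\E[[X,Y]|\mathscr{A}] = [\mu_X, \mu_Y]$, so the centered variable $\tilde Z := Z - \E[Z|\mathscr{A}]$ decomposes as
\begin{equation*}
    \tilde Z = \tilde X + \tilde Y + \tfrac{1}{2}\bigl([\mu_X,\tilde Y] + [\tilde X, \mu_Y] + [\tilde X,\tilde Y]\bigr) + \bigl(R - \E[R|\mathscr{A}]\bigr),
\end{equation*}
with $\tilde X := X - \mu_X$, $\tilde Y := Y - \mu_Y$. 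Call the last two groups of terms $W$, so $|W| \ll_G \eps^2$. Then
\begin{equation*}
    \tr_{g_0}(ab|\mathscr{A}) = \E\bigl[|\tilde X + \tilde Y|^2 \,\big|\, \mathscr{A}\bigr] + 2\,\E\bigl[\langle \tilde X + \tilde Y, W\rangle \,\big|\, \mathscr{A}\bigr] + \E\bigl[|W|^2 \,\big|\, \mathscr{A}\bigr].
\end{equation*}
Conditional independence and centering kill the cross term in $|\tilde X + \tilde Y|^2$, leaving $\tr_{g_0}(a|\mathscr{A}) + \tr_e(b)$ as the main contribution. The remaining two pieces are deterministically bounded: $|W|^2 \ll_G \eps^4$, and the inner product $|\langle \tilde X + \tilde Y, W\rangle| \ll_G \eps \cdot \eps^2 = \eps^3$ by Cauchy--Schwarz. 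Summing yields the claimed $O_G(\eps^3)$ error.

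The only genuine subtlety is keeping track of the cross term $\E[\langle \tilde X + \tilde Y, W\rangle | \mathscr{A}]$: a priori one might hope that conditional independence of $\tilde X$ and $\tilde Y$ produces cancellations that improve the error to $O(\eps^4)$, but terms like $\langle \tilde X, [\tilde X, \mu_Y]\rangle$ are genuine third moments that need not vanish. The pointwise bound $|W| \ll_G \eps^2$ combined with $|\tilde X + \tilde Y| \ll \eps$ is what forces the $\eps^3$ rate and not better; I expect this is why the statement is phrased with $O_G(\eps^3)$ rather than a smaller exponent.
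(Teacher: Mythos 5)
Your proof is correct and follows essentially the same route as the paper's: write $\log$ of the product as $X+Y$ plus an error that is $O_G(\eps^2)$ pointwise (your BCH bracket plus remainder; the paper just invokes Taylor's theorem), then expand the second moment of the centred sum, with conditional independence killing the $\langle\tilde X,\tilde Y\rangle$ cross term and the error contributing $O_G(\eps^3)$ via Cauchy--Schwarz. The only difference is organisational: the paper first reduces to an unconditional statement using Lemma~\ref{ConditionalConvolution} (i.e.\ $[ab|\mathscr{A}]=[a|\mathscr{A}]\,b$) and computes ordinary variances, whereas you carry the $\sigma$-algebra through the computation directly, verifying by hand the conditional-independence facts that the lemma packages; both are equally valid.
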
 

Note that under the assumptions of Lemma~\ref{GVarProdBound} it holds by Lemma~\ref{ConditionalConvolution} that $$[ab|\mathscr{A}] = [a|\mathscr{A}][b|\mathscr{A}] = [a|\mathscr{A}]b.$$ Therefore the claim follows from the following unconditional version.

\begin{lemma}
    Let $\eps > 0$ be sufficiently small and let $g$ and $h$ be independent random variables taking values in $G$. Suppose that the image of $g$ is contained in $B_{\eps}$ and the image of $h$ is contained in $B_{\eps}(h_0)$ for some $h_0 \in G$. Then $$\tr_{h_0}(hg) = \tr_{h_0}(h) + \tr_e(g) + O_G(\eps^3).$$
\end{lemma}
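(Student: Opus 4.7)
The plan is to expand $\log(h_0^{-1}hg)$ by the Baker--Campbell--Hausdorff (BCH) formula and then read off the trace of the covariance of the result. Set $u = \log(h_0^{-1}h)$ and $v = \log(g)$. Left-invariance of $d$ gives $d(e, h_0^{-1}h) \leq \eps$ and $d(e,g) \leq \eps$, and the standard comparison $d(e, \exp(x)) \asymp_G |x|$ on a small neighbourhood of $0 \in \mathfrak{g}$ yields $|u|, |v| \ll_G \eps$. Provided $\eps$ is sufficiently small in terms of $G$, the BCH formula then gives
\[
\log(h_0^{-1}hg) \;=\; \log\!\big(\exp(u)\exp(v)\big) \;=\; u + v + \tfrac{1}{2}[u,v] + R,
\]
where $R \in \mathfrak{g}$ is a random element depending on $u,v$ with $|R| \ll_G \eps^3$ pointwise.

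Next I would write $\log(h_0^{-1}hg) = A + B$ with $A := u + v$ and $B := \tfrac{1}{2}[u,v] + R$. Since $h$ and $g$ are independent, so are $u$ and $v$, which gives
\[
\tr(\mathrm{Cov}(A)) = \tr(\mathrm{Cov}(u)) + \tr(\mathrm{Cov}(v)) = \tr_{h_0}(h) + \tr_e(g).
\]
The bounds $|u|, |v| \ll_G \eps$ and $|R| \ll_G \eps^3$ imply $|B| \ll_G \eps^2$, hence $\tr(\mathrm{Cov}(B)) \leq \E[|B|^2] \ll_G \eps^4$. Expanding
\[
\mathrm{Cov}(A + B) = \mathrm{Cov}(A) + \mathrm{Cov}(B) + \mathrm{Cov}(A,B) + \mathrm{Cov}(B,A)
\]
and applying Cauchy--Schwarz coordinatewise gives
\[
|\tr(\mathrm{Cov}(A,B))| \leq \sqrt{\tr(\mathrm{Cov}(A)) \cdot \tr(\mathrm{Cov}(B))} \ll_G \eps \cdot \eps^2 = \eps^3,
\]
where I also use $\tr(\mathrm{Cov}(A)) \leq \E[|A|^2] \ll_G \eps^2$. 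Summing the four contributions yields $\tr_{h_0}(hg) = \tr_{h_0}(h) + \tr_e(g) + O_G(\eps^3)$, as required.

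I do not expect a substantive obstacle. The BCH expansion with an explicit cubic remainder is standard once both factors lie in a small neighbourhood of $e$; the metric-to-norm comparison used above is the only non-algebraic step and is immediate from the definition of the left-invariant metric induced by the chosen inner product on $\mathfrak{g}$. The cross-covariance bound is the key reason the combined error is $O_G(\eps^3)$ rather than the naive $O_G(\eps^2)$ one might fear from the $\tfrac{1}{2}[u,v]$ term in $B$.
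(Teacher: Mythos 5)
Your proof is correct and follows essentially the same route as the paper: expand $\log(\exp(u)\exp(v))$ as $u+v$ plus an $O_G(\eps^2)$ correction, use independence to split the trace of the covariance of $u+v$, and observe that the cross term between the $O(\eps)$ main part and the $O(\eps^2)$ correction is what produces the $O_G(\eps^3)$ error. The paper simply absorbs $\tfrac{1}{2}[u,v]$ into a single Taylor remainder $E$ with $|E|\ll\eps^2$ and bounds the cross terms pointwise rather than via Cauchy--Schwarz on covariances, but this is a cosmetic difference.
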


\begin{proof}
    Let $X = \log(h_0^{-1}h)$ and let $Y = \log(g)$. Then $|X|, |Y| \leq \eps$ almost surely and by Taylor's theorem there is a random variable $E$ with $|E| \ll \eps^2$ almost surely such that $$\log(\exp(X)\exp(Y)) = X + Y + E.$$ Therefore 
    \begin{align*}
        \tr_{h_0}(hg) &= \E[|X + Y + E|^2] - |\E[X + Y + E]|^2 \\
        &=  \E[|X + Y|^2] - |\E[X + Y]|^2 \\ &+ 2 \mathbb{E}[(X + Y) \cdot E] + \mathbb{E}[|E|^2] - 2\mathbb{E}[X + Y]\mathbb{E}[E] - |\mathbb{E}[E]|^2\\
        &= \mathrm{Var}[X + Y] + O_G(\eps^3) =  \mathrm{Var}[X] + \mathrm{Var}[Y] + O_G(\eps^3).
    \end{align*}
\end{proof}

\begin{proof}(of Proposition~\ref{AbstractEntropyIncrease})
    We note that by \eqref{ConditionalVarianceExpectation} and Lemma~\ref{BasicEntropyGrowth}, it holds that $$\E[H((gs_1 | gs_2))]  \geq H(g; s_1|s_2) + H(s_1)$$ and so by \eqref{ConditionalGVarianceEntIneq},
    $$\E\left[ \frac{\ell}{2} \log \frac{2\pi e}{\ell} \tr_{gs_2}(gs_1|gs_2)   \right] + O_G(\eps) \geq H(g; s_1|s_2)  + H(s_1).$$

    Note that $(gs_2)^{-1}g = s_2^{-1}$, which is contained in $B_{\eps}(e)$. Therefore by Lemma~\ref{GVarProdBound}, $$\tr_{gs_2}(gs_1|gs_2) \leq \tr_{gs_2}(g|gs_2) + \tr_e(s_1) + O_G(\eps^3)$$ and so $$H(g; s_1|s_2)  + H(s_1) \leq \E\left[ \frac{\ell}{2} \log \frac{2\pi e}{\ell}\left(\tr_{gs_2}(g|gs_2) + \tr_e(s_1) + O_G(\eps^3) \right)  \right] + O_G(\eps).$$ Thus
    $$\frac{2}{\ell}\left(  H(g; s_1|s_2) - c \right) \leq \E\left[ \log \left( 1 + \frac{\tr_{gs_2}(g|gs_2)}{\tr_e(s_1)}  + O_G(A^{-1}\eps)   \right) \right].$$ Using that  $\log(1 + x)\leq x$ for $x\geq 0$, we conclude the claim. 
\end{proof}

\subsection{Proof of Theorem~\ref{MainEntropyIncrease}} \label{Entropy:BetweenScales}

The proof relies on the following lemma. We  recall from the introduction that  $\beta_{a,r}$ is the random variable with density function $f_{a,r}: \mathfrak{g} \to \R$ given by $$f_{a,r}(x) = \begin{cases}
    C_{a,r}e^{-\frac{|x|^2}{2r^2}} & \text{if } |x| \leq ar, \\
    0 &\text{otherwise},
    \end{cases}$$ where $C_{a,r}$ is a normalising constant to ensure that $f_{a,r}$ integrates to $1$.

\begin{lemma}\label{VarEntProperties}
    The following properties hold for $r > 0$ and $a\geq 1$:
    \begin{enumerate}[label = (\roman*)]
        \item $\ell r^2 \ll \tr(\beta_{a,r}) \leq \ell r^2$ and for $\eta_{a,r}$ the distribution of $\beta_{a,r}$, $$H(\eta_{a,r}) = \frac{\ell}{2}  \log 2\pi e r^2 + O_{\ell}(e^{-a^2/4}).$$
        \item If $ar$ is sufficiently small,  $\ell r^2 \ll \tr_e(s_{a,r}) \leq \ell r^2$ and 
        $$H(s_{a,r}) = \frac{\ell}{2}  \log 2\pi e r^2 + O_{\ell}(e^{-a^2/4}) + O_{G}(ar).$$
    \end{enumerate}
\end{lemma}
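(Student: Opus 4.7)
The plan is to treat (i) as essentially a calculation about a truncated isotropic Gaussian on $\mathfrak{g} \cong \R^{\ell}$, and then deduce (ii) by transporting the estimates through $\exp$ and controlling the Jacobian correction. Throughout, $\beta_{a,r}$ has density $C_{a,r} e^{-|x|^2/(2r^2)} \mathbf{1}_{|x| \leq ar}$, which is an isotropic Gaussian with covariance $r^2 \mathrm{Id}$ truncated to the ball of radius $ar$.

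For (i), first I would estimate the normalizing constant. Standard Gaussian tail bounds (passing to polar coordinates and using the chi-squared tail) give
\[
\int_{|x| > ar} e^{-|x|^2/(2r^2)} \, dm_{\mathfrak{g}}(x) \;=\; (2\pi r^2)^{\ell/2} \cdot O_{\ell}\bigl(e^{-a^2/4}\bigr),
\]
so $C_{a,r} = (2\pi r^2)^{-\ell/2}\bigl(1 + O_{\ell}(e^{-a^2/4})\bigr)$. By symmetry $\E[\beta_{a,r}]=0$. For the trace, splitting the integral $\int |x|^2 f_{a,r}(x)\, dm_{\mathfrak{g}}(x)$ into the Gaussian integral over all of $\mathfrak{g}$ (which gives $\ell r^2$) minus the tail (bounded by $\ell r^2 \cdot O_{\ell}(e^{-a^2/4})$) and rescaling by $1 + O_{\ell}(e^{-a^2/4})$ yields $\tr(\beta_{a,r}) = \ell r^2 \bigl(1 + O_{\ell}(e^{-a^2/4})\bigr)$, giving both the upper bound and the lower bound $\ell r^2 \ll \tr(\beta_{a,r})$. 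For the entropy, the definition gives
\[
H(\beta_{a,r}) = -\log C_{a,r} + \frac{1}{2r^2}\E[|\beta_{a,r}|^2] = \frac{\ell}{2}\log(2\pi r^2) + \frac{1}{2} \cdot \ell + O_{\ell}(e^{-a^2/4}) = \frac{\ell}{2}\log(2\pi e r^2) + O_{\ell}(e^{-a^2/4}),
\]
using the two estimates just obtained.

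For (ii), the identity $\log(s_{a,r}) = \beta_{a,r}$ gives $\tr_e(s_{a,r}) = \tr(\beta_{a,r})$, so the trace claim follows immediately from (i). For the entropy, I would apply the change of variables through $\exp$: writing $J(x) = \frac{d(\exp_{*} m_{\mathfrak{g}})}{dm_G}(\exp x)$, one has (for $ar$ small enough that $\exp$ is a diffeomorphism on the relevant ball) that the density of $s_{a,r}$ with respect to $m_G$ at $g = \exp x$ equals $f_{a,r}(x)/J(x)$, and hence
\[
H(s_{a,r}) = H(\beta_{a,r}) + \E\bigl[\log J(\beta_{a,r})\bigr].
\]
By the normalisation of $m_G$ stated in the introduction, $J(x) = 1 + O_G(|x|)$ on a small neighbourhood of $0$, so $|\log J(\beta_{a,r})| \ll_G ar$ almost surely, giving $\E[\log J(\beta_{a,r})] = O_G(ar)$ and the claimed expression. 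This is essentially an application of Lemma~\ref{EntProductBound2} with $\Phi = \exp$, but it is cleaner to do the change of variables directly.

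The routine part is the Gaussian tail bookkeeping in (i); the only subtlety I foresee is making sure the error in the normalizing constant, the second moment, and $\log C_{a,r}$ all line up so that the two $O_{\ell}(e^{-a^2/4})$ corrections in (i) combine rather than cancel into something larger. In (ii), the one thing to verify carefully is that the smoothness of the Jacobian of $\exp$ indeed gives a first-order bound $J = 1 + O_G(|x|)$ (and not only $1 + O_G(|x|^2)$ or worse) on the relevant ball; this follows because the derivative of $J$ at the origin is a fixed linear functional on $\mathfrak{g}$ depending only on the structure constants of $G$, which is exactly what is absorbed into the $O_G(\cdot)$ constant.
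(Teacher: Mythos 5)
Your proof is correct and follows essentially the same route as the paper: the same truncated-Gaussian tail estimates for the normalizing constant, trace and entropy in (i), and the Jacobian/Haar-density comparison under $\exp$ for the $O_G(ar)$ correction in (ii). The only cosmetic differences are that the paper reduces to $r=1$ and rescales, and it leaves the deduction of (ii) implicit, which you spell out.
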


\begin{proof}
    We note that (ii) follows from (i). To prove (i), we deal initially with the $r = 1$ case. Note first that $$\int_{x \in \R^{\ell}, |x| \leq a} e^{-|x|^2/2} \, dx \leq \int_{x \in \R^{\ell}} e^{-|x|^2/2} \, dx = \prod_{i = 1}^\ell \int_{\R} e^{-x_i^2/2 } \, dx_i = (2\pi)^{\ell/2}$$ and by using spherical coordinates 
    \begin{align*}
        \int_{x \in \R^{\ell}, |x| \geq a} e^{-|x|^2/2} \, dx &= c_{\ell} \int_{a}^{\infty} u^{\ell - 1} e^{-u^2/2} \, du \\ &\ll_{\ell} \int_{a}^{\infty}  e^{-u^2/3} \, du \leq \int_a^{\infty} e^{-au/3} \, du = \frac{3}{a}e^{-a^2/3} \ll_{\ell} e^{-a^2/4}.
    \end{align*} Thus we conclude $$\int_{x \in \R^{\ell}, |x| \leq a} e^{-|x|^2/2} \, dx  = (2\pi)^{\ell/2} - \int_{x \in \R^{\ell}, |x| \geq a} e^{-||x||^2/2} \, dx \geq (2\pi)^{\ell/2} - O_{\ell}(e^{-a^2/4})$$ and therefore $C_{1,a} = (2\pi)^{-\ell/2} + O_{\ell}(e^{-a^2/4}).$ We are now in a suitable position to calculate $H(\eta_{1,a})$. Indeed, 
    \begin{align*}
        H(\eta_{1,a}) &= \int_{|x| \leq a} -C_{1,a} e^{-|x|^2/2} \log\left(C_{1,a} e^{-|x|^2/2} \right) \, dx \\ &= \int_{|x| \leq a} C_{1,a}\left(  \frac{|x|^2}{2} - \log C_{1,a} \right) e^{-|x|^2/2}  \, dx 
    \end{align*} 
    We calculate 
    \begin{align*}
    &\int_{x\in \R^{\ell}} C_{1,a}\left(  \frac{|x|^2}{2} - \log C_{1,a} \right) e^{-|x|^2/2}  \, dx \\ &= (2\pi)^{\ell/2} C_{1,a}\left( \frac{\ell}{2} - \log C_{1,a} \right) \\
    &= \left( 1 + O_{\ell}(e^{-a^2/4}) \right)\left(  \frac{\ell}{2}\log e + \frac{\ell}{2} \log 2\pi + O_{\ell}(e^{-a^2/4}) \right) \\
    &= \frac{\ell}{2} \log 2\pi e + O_{\ell}(e^{-a^2/4}).
    \end{align*} and again using spherical coordinates, 
    \begin{align*}
    &\int_{|x| \geq a} C_{1,a}\left(  \frac{|x|^2}{2} - \log C_{1,a} \right) e^{-|x|^2/2}  \, dx \\ &= c_{\ell}\int_{a}^{\infty} C_{1,a}\left(  \frac{u^2}{2} - \log C_{1,a} \right) u^{\ell - 1} e^{-u^2/2}  \, dx \\
    &\ll_{\ell} O_{\ell}(e^{-a^2/4}).
    \end{align*} Thus the claimed bound on $H(\eta_{1,a})$ follows. Since $f_{a,r}(x) = r^{\ell}C_{1,a}f_{1,a}(x/r)$ it follows that $H(\eta_{a,r}) = \log(r^{\ell}) + H(\eta_{1,a})$ and hence the proof is complete. 
\end{proof}

\begin{proof}(of Theorem~\ref{MainEntropyIncrease})
    We apply Proposition~\ref{AbstractEntropyIncrease} to $s_1 = s_{a,r}$ and $s_2 = s_{a, 2r}$ and we set $\eps = \ell ar$. By Lemma~\ref{VarEntProperties} (ii) we have that $\mathrm{tr}_e(s_{1}) \gg_{G} \ell r^2 \gg_{G} a^{-2}\eps^2$ and  $c = \frac{\ell}{2} \log \frac{2\pi e}{\ell} \mathrm{tr}_e(s_1) - H(s_1) \leq O_{G}(e^{-a^2/4} + ar).$ Applying Proposition~\ref{AbstractEntropyIncrease} with $A = a^{-2}$, $$\E[\mathrm{tr}_{gs_2}(g|gs_2)]  \gg_G  r^2(H_a(g; r|2r) - O_{G}(e^{-a^2/4} + a^3 r)).$$ On the other hand, we have that $|\log((gs_2)^{-1}g)| = |\log s_2| \leq 2ar$ and therefore $$\tr(g;2ar) \geq (2ar)^{-2}\E[\mathrm{tr}_{gs_2}(g|gs_2)]\gg_G a^{-2}(H_a(g; r|2r) - O_{G}(e^{-a^2/4} + a^3r)).$$
\end{proof}

    \section{From Entropy Gap to Trace Sum} \label{EntGapTrSumSection}

In this section we prove Proposition~\ref{EntropyGapToTrSum}, which we deduce from the following two propositions.

\begin{proposition}\label{trIntBound}
    Let $g$ be a $G$-valued random variable independent of $(s_{a,r})_{a \geq 1, r> 0}$ and let $0 < r_1 < r_2$. Let $a \geq 1$ such that $ar_2$ is sufficiently small in terms of $G$. Suppose that for all $r_1' \in [r_1, 2r_1]$ as well as $r_2' \in [r_2/2, 2r_2]$ it holds for some constant $C> 0$ that $$H_a(g; r_1'| r_2') \geq C.$$ Then $$\int_{ar_1}^{4ar_2} \frac{1}{u} \tr(g;u) \, du \gg_G a^{-2}(C - N\cdot (O_G(e^{-\frac{a^2}{4}} + a^3r_2))$$ for $N = \left\lceil \frac{\log r_2 - \log r_1}{\log 2} \right\rceil - 1$.
\end{proposition}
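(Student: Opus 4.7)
The plan is to apply Theorem~\ref{MainEntropyIncrease} pointwise at each scale $r \in [r_1, r_2/2]$ and integrate the resulting bound against $dr/r$. Writing $F(r) := H_a(g;r)$ so that $H_a(g; r|2r) = F(r) - F(2r)$, Theorem~\ref{MainEntropyIncrease} yields
\begin{equation*}
\tr(g; 2ar) \gg_G a^{-2}\bigl( F(r) - F(2r) - O_G(e^{-a^2/4} + a^3 r)\bigr)
\end{equation*}
for all $r \in [r_1, r_2/2]$, since $ar \leq ar_2/2$ remains within the small-scale regime where the theorem applies. Dividing by $r$, integrating over $r \in [r_1, r_2/2]$, and substituting $u = 2ar$ on the left (so that $du/u = dr/r$) converts the left-hand side into $\int_{2ar_1}^{ar_2} \tr(g;u)\,du/u$, which is in turn bounded by the target integral $\int_{ar_1}^{4ar_2} \tr(g;u)\,du/u$ using $\tr(g;\cdot) \geq 0$ and $[2ar_1, ar_2] \subset [ar_1, 4ar_2]$.

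To handle the main term, the substitution $s = 2r$ in the piece involving $F(2r)$ gives the identity
\begin{equation*}
\int_{r_1}^{r_2/2} \frac{F(r)-F(2r)}{r}\,dr = \int_{r_1}^{2r_1}\frac{F(r)}{r}\,dr - \int_{r_2/2}^{r_2}\frac{F(r)}{r}\,dr,
\end{equation*}
valid once $r_2 \geq 4 r_1$ (small-$N$ cases reduce to trivial modifications of constants). The key trick is now to exploit that the hypothesis gives $F(r_1') - F(r_2') \geq C$ for \emph{every} pair $(r_1', r_2')$ in the rectangle $[r_1, 2r_1] \times [r_2/2, r_2]$. Integrating this pointwise inequality over the rectangle against the product measure $\frac{dr_1'}{r_1'}\cdot\frac{dr_2'}{r_2'}$, the right-hand side splits as $\log 2$ times the displayed difference of integrals, while the left-hand side is $C(\log 2)^2$, so one factor of $\log 2$ cancels to yield
\begin{equation*}
\int_{r_1}^{2r_1}\frac{F(r)}{r}\,dr - \int_{r_2/2}^{r_2}\frac{F(r)}{r}\,dr \geq C \log 2.
\end{equation*}

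The error integral is controlled by
\begin{equation*}
\int_{r_1}^{r_2/2} \frac{e^{-a^2/4} + a^3 r}{r}\,dr \ll e^{-a^2/4}\log(r_2/r_1) + a^3 r_2 \ll N\bigl(e^{-a^2/4} + a^3 r_2\bigr),
\end{equation*}
using $\log(r_2/r_1) \leq (N+1)\log 2$. Assembling the three pieces and absorbing the factor $\log 2$ into $\gg_G$ gives the claim. The only mildly delicate point, and thus the main technical checkpoint, is justifying the Fubini step, which requires measurability of $r \mapsto F(r)$; this follows from continuity of the family $(s_{a,r})_r$ in $L^1$ together with standard continuity properties of differential entropy, so no genuine obstacle arises.
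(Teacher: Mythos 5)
Your argument is correct and is essentially the paper's: both apply Theorem~\ref{MainEntropyIncrease} at a continuum of scales and integrate the resulting bound against the multiplicative measure $du/u$, exploiting the fact that the hypothesis holds for whole ranges of endpoints in order to absorb the telescoping boundary terms. The only difference is bookkeeping: the paper telescopes the dyadic chain $\sum_{i=1}^{N} H_a(g;2^{i-1}ur_1\,|\,2^{i}ur_1)\ge C$ for each fixed $u\in[1,2)$ and then integrates over $u$, whereas you integrate first and control the two resulting boundary integrals by averaging the hypothesis over the rectangle $[r_1,2r_1]\times[r_2/2,r_2]$ --- an equivalent computation.
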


\begin{proof}
    Let $a \geq 1$ and set $ N = \left\lceil \frac{\log r_2 - \log r_1}{\log 2} \right\rceil - 1 $. Note that $2^{N+1} r_1 \geq r_2$ as well as $2^{N} r_1 \leq r_2$.
    
    Given $u \in [1,2)$ and an integer $1\leq i \leq N$ denote $$k_i(u) = H_a(g; 2^{i-1}ur_1| 2^i u r_1).$$ Then by Theorem~\ref{MainEntropyIncrease}, there is some constant $c = c(G) > 0$ depending only on $G$ such that 
    \begin{equation}\label{trbound}
        \mathrm{tr}(g; a 2^i u r_1) \geq c a^{-2}(k_i(u) - O_G(e^{-a^2/4} + a^3 2^i r_1) ).
    \end{equation} Thus $$\sum_{i = 1}^{N} \tr(g; a2^i u r_1) \geq ca^{-2} \left( \sum_{i = 1}^N k_i(u) - O_G(Ne^{-\frac{a^2}{4}} + Na^3 2^N r_1) \right).$$ Note that for $u \in [1,2)$ we have $a2^{N + 1} u r_1 \leq 4ar_2$ and $a u r_1 \geq a r_1$. Therefore, upon substituting $v = a2^{i}ur_1$ in the third line, \begin{align}
        &\int_{ar_1}^{4ar_2}  \frac{1}{v} \tr(g; v) \, dv \nonumber \\ \geq &\sum_{i = 1}^{N} \int_{a2^{i} u r_1}^{a2^{i + 1}u r_1} \frac{1}{v} \tr(g; v) \, dv \nonumber\\
        \geq &\sum_{i = 1}^{N} \int_{1}^{2} \frac{1}{u} \tr(g; a 2^i ur_1) \, du \nonumber \\
        \geq &ca^{-2} \int_1^2 \frac{1}{u}\left( \sum_{i = 1}^N k_i(u) - O_{G}(Ne^{-\frac{a^2}{4}} + N a^3 2^N r_1)\right) du. \label{IntBound}
    \end{align}

    Observe that by our assumption $\sum_{i = 1}^N k_i(u) = H_a(g; ur_1| 2^N ur_1 ) \geq C$  and therefore the claim follows using that $2^N r_1 \leq r_2$. 
\end{proof}

\begin{proposition}\label{LotsOfTrace}
    Suppose that for a $G$-valued random variable $g$ and $0 < r_1 < r_2$ we have for some constant $C_1 > 0$ that $$\int_{r_1}^{r_2} \frac{1}{u} \tr(g; u) \, du \geq C_1.$$ Let $A > 1$. Then there exists $s_1, \dots, s_{m} \in (r_1, r_2)$ where $m = \lceil \frac{\log r_2 - \log r_1}{2\log A} \rceil$ such that
    \begin{equation*}
        \sum_{i=1}^m \tr (g ; s_i) \geq \frac{C_1}{4 \log A} \quad\quad\quad \text{ and } \quad\quad\quad s_{i+1} \geq A s_i
    \end{equation*}
    for all $1 \leq i \leq m-1$.
\end{proposition}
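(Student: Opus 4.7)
The plan is to work in logarithmic coordinates and combine a parity pigeonhole with a mean-value selection. Setting $L = \log r_2 - \log r_1$ and substituting $v = \log u$ rewrites the hypothesis as
\begin{equation*}
\int_{\log r_1}^{\log r_2} \tr(g; e^v)\, dv \geq C_1.
\end{equation*}
First I would partition $[\log r_1, \log r_2]$ into $2m$ equal subintervals $J_1, \ldots, J_{2m}$ of common log-length $L/(2m)$. By the definition $m = \lceil L/(2\log A) \rceil$, we have $m \geq L/(2\log A)$, so each subinterval has length at most $\log A$.

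Writing $T_k = \int_{J_k} \tr(g; e^v)\, dv$, the hypothesis yields $\sum_{k=1}^{2m} T_k \geq C_1$, hence either the odd-indexed or the even-indexed subintervals together carry mass at least $C_1/2$; without loss of generality assume the odd-indexed ones. Inside each $J_{2i-1}$ ($i = 1, \ldots, m$), the integral mean-value principle produces a point $v_i \in J_{2i-1}$ with $\tr(g; e^{v_i}) \geq T_{2i-1} \cdot (2m/L)$, and I set $s_i = e^{v_i}$. Because two consecutive choices $v_i$ and $v_{i+1}$ are separated by the full intermediate subinterval $J_{2i}$, one obtains $\log s_{i+1} - \log s_i \geq L/(2m) \geq \log A$, i.e.\ $s_{i+1} \geq A s_i$ as required.

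Summing the mean-value bounds gives
\begin{equation*}
\sum_{i=1}^m \tr(g; s_i) \geq \frac{2m}{L}\sum_{i=1}^m T_{2i-1} \geq \frac{2m}{L}\cdot \frac{C_1}{2} = \frac{C_1 m}{L} \geq \frac{C_1}{2\log A},
\end{equation*}
which is in fact stronger than the stated $C_1/(4\log A)$. I do not foresee a serious obstacle: the only delicate point is arranging the partition so that the parity argument simultaneously retains enough mass \emph{and} guarantees multiplicative separation by $A$. This is precisely why the denominator in the definition of $m$ is $2\log A$ rather than $\log A$: the extra factor of two buys a full intermediate subinterval of log-length $\geq \log A$ between any two chosen points.
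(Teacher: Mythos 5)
Your overall strategy---alternating blocks in logarithmic scale, a parity pigeonhole keeping the class that carries at least half the mass, and a point in each retained block dominating the average---is the same idea as the paper's proof, but there is a genuine gap in the separation step. From $m = \lceil \frac{L}{2\log A}\rceil$ you get $m \geq \frac{L}{2\log A}$, hence $\frac{L}{2m} \leq \log A$; you state this correctly when you observe that each of your $2m$ equal subintervals has length \emph{at most} $\log A$. The guaranteed log-gap between $s_i$ and $s_{i+1}$ is only the length of the single intermediate subinterval $J_{2i}$, namely $\frac{L}{2m}$, so in your chain $\log s_{i+1} - \log s_i \geq \frac{L}{2m} \geq \log A$ the last inequality points the wrong way: in general $\frac{L}{2m} < \log A$ (strictly, unless $\frac{L}{2\log A}$ is an integer), and $s_{i+1} \geq A s_i$ does not follow. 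Concretely, if $\log(r_2/r_1) = 3\log A$ then $m = 2$, your subintervals have log-length $\frac{3}{4}\log A$, and the selection only yields $s_2 \geq A^{3/4} s_1$, which may fail the required bound depending on where the mass of $u \mapsto \frac{1}{u}\tr(g;u)$ sits.

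The fix is not to subdivide $[\log r_1, \log r_2]$ into $2m$ \emph{equal} parts but into blocks of log-length \emph{exactly} $\log A$, i.e.\ to use the grid $a_i = r_1 A^{i-1}$ for $i = 1, \ldots, 2m+1$: the definition of $m$ guarantees $a_{2m+1} \geq r_2$, so the $2m$ blocks cover $[r_1, r_2]$ (the last block may overshoot $r_2$; intersect with $(r_1,r_2)$ when selecting, which only helps). The parity pigeonhole then retains $m$ blocks carrying mass at least $C_1/2$ with respect to $\frac{du}{u}$, any two of which are separated by a full block of $\frac{du}{u}$-measure $\log A$, so $s_{i+1} > a_{2i+1} = A\, a_{2i} > A s_i$ automatically. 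In each retained block one picks $s_i$ with $\tr(g;s_i) \geq \frac{1}{2}\sup$ over the block (or a point dominating the average, as you do), which gives $\tr(g;s_i) \geq \frac{1}{2\log A}\int_{\mathrm{block}} \frac{1}{u}\tr(g;u)\,du$ since each block has $\frac{du}{u}$-measure $\log A$; summing yields $\sum_i \tr(g;s_i) \geq \frac{C_1}{4\log A}$. This is precisely the paper's argument; your equal-length partition is the only point of failure, but it is exactly the step on which the multiplicative separation $s_{i+1} \geq A s_i$ rests, and it is why the blocks must have log-length $\log A$ rather than $\frac{L}{2m}$.
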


\begin{proof}
    Define $a_1, a_2, \ldots , a_{2m + 1}$ by $a_i = r_1A^{i-1}$. Therefore $a_1 = r_1$ and $a_{2m + 1} \geq r_2$. Let $U$ and $V$ be defined by $$U = \bigcup_{i = 1}^{m} [a_{2i-1}, a_{2i}) \quad \text{  and  } \quad V = \bigcup_{i = 1}^{m} [a_{2i}, a_{2i+1}).$$
    Without loss of generality, upon replacing $U$ with $V$, by our assumption  $$\int_U \frac{1}{u} \tr(g; u) \, du \geq C_1/2.$$
    For $i \in [m]$ let $s_i \in (a_{2i-1}, a_{2i})$ be chosen such that $$\tr(g; s_i) \geq \frac{1}{2}\sup_{u \in (a_{2i-1}, a_{2i})} \tr(g; u).$$ In particular, $$\tr(g; s_i) \geq \frac{1}{2 \log A } \int_{a_{2i-1}}^{a_{2i}} \frac{1}{u}\tr(g; u) \, du.$$ Summing over $i$ gives 
    \begin{align*}
        \sum_{i = 1}^{m} \tr(g; s_i) \geq \frac{1}{2 \log A} \int_U \frac{1}{u} \tr(g; u) \, du \geq \frac{C_1}{4 \log A}.
    \end{align*} 
\end{proof}

To deduce Proposition~\ref{EntropyGapToTrSum}, one uses Proposition~\ref{LotsOfTrace} with the range $(ar_1, 4ar_2)$ and with $$C_1 =  a^{-2}(C - N\cdot (O_G(e^{-\frac{a^2}{4}} + a^3r_2)).$$

\bibliography{referencesgeneral.bib}
\end{document}